\documentclass{amsart}
\usepackage{amssymb,amsfonts}
\usepackage[all,arc]{xy}
\usepackage{enumerate}
\usepackage{amscd}
\usepackage{makecell}
\usepackage{graphicx}
\usepackage{mathrsfs}
\usepackage{amsmath}
\usepackage{amsfonts}
\usepackage{hyperref}
\usepackage{booktabs}
\usepackage{cleveref}
\usepackage{tikz}
\usepackage[utf8]{inputenc}
\usepackage[margin=1.3in]{geometry}
\usepackage[space]{grffile}

\usepackage{graphicx}
\usepackage{color}

\newcommand{\R}{\mathbb{R}}

\newcommand{\Q}{\mathbb{Q}}

\newcommand{\Z}{\mathbb{Z}}

\newcommand{\ra}{\rightarrow}

\newcommand{\lip}{\operatorname{Lip}}

\newtheorem{thm}{Theorem}[section]
\newtheorem{cor}[thm]{Corollary}
\newtheorem{prop}[thm]{Proposition}
\newtheorem{lem}[thm]{Lemma}

\newtheorem{defn}[thm]{Definition}

\title{Flexible exponents of non-geometric 3-manifolds}

\author{Jianfeng Lin}
\address{Department of Mathematics Science, Tsinghua University, Beijing, 100080, CHINA}
\email{linjian5477@mail.tsinghua.edu.cn}

\author{Hongbin Sun}
\address{Department of Mathematics, Rutgers University - New Brunswick, Hill Center, Busch Campus, Piscataway, NJ 08854, USA}
\email{hongbin.sun@rutgers.edu}
\author{Zhongzi Wang}
\address{Department of Mathematical Sciences, Peking University, Beijing 100871 CHINA}
\email{wangzz22@stu.pku.edu.cn}

\date{\today}
\subjclass[2020]{57K30, 53C23}
\keywords{Non-zero degree maps, Lipschitz maps, flexible exponents, $3$-manifolds, connected sums}
\thanks{H.S. is partially supported by the Simons Collaboration Grant 615229.}

\begin{document}
\bibliographystyle{amsalpha}
\maketitle

\begin{abstract}  
 A classical question in quantitative topology is to bound the mapping degree  $\operatorname{deg}(f)$ in terms of its Lipchitz constant $\operatorname{Lip}(f)$. For a closed, orientable, Riemannian manifold $M$, the flexible exponent $\alpha(M)$ is the infimum of  $\alpha\geqslant 0$ such that $|\deg (f)|\leqslant C\cdot (\lip (f))^\alpha$ holds for any Lipschitz map $f:M\ra M$. 
  
    For a geometric 3-manifold $M$ in the sense of Thurston,   $\alpha(M)$ is determined in \cite{DLWWW}.
In this paper, we determine $\alpha(M)$ for non-geometric 3-manifolds. 
\end{abstract}

\tableofcontents

\section{Introduction}


We assume all manifolds in this paper are connected, compact, and orientable, unless otherwisely stated.

For a closed, orientable manifold $M$, we use $D(M)$ to denote the set of degrees of self-maps of $M$. Following Gromov, a closed, orientable manifold $M$ is called inflexible (or flexible) if and only if $D(M)$ is finite (or infinite). 
Inspired by a recent work of Berdnikov-Guth-Manin \cite{BGM}, the flexible exponent $\alpha(M)$ was defined in \cite{DLWWW}, which is a numerical invariant of $M$. Roughly speaking, the flexible exponent measures how effectively a self-map wraps $M$ to itself.

Let $M$ and $N$ be compact Riemannian manifolds.
For a map $f:M\to N$, we define the Lipschitz constant of $f$ as
\[
    \lip (f):=\sup_{u,v\in M} \frac{d_N(f(u),f(v))}{d_M(u,v)}\in [0,+\infty].
\]
We say that $f$ is an $L$-Lipschitz map for some positive number $L$ if $\lip (f)\leqslant L<+\infty$.


\begin{defn}
    Suppose $M$ is a closed, orientable Riemannian $n$-manifold. The flexible exponent of $M$ is defined as 
 \[\alpha(M):=\inf\{\alpha\geq 0\mid \exists C>0 \text{ s.t. } |\operatorname{deg}(f)|\leq C\cdot (\operatorname{Lip}(f))^{\alpha} \text{ for all }f:M\to M\}.\]    
    \end{defn}

 It is known  that  $\alpha(M)\in[0,\dim M]$ always holds, and $\alpha(M)>0$ if and only if $M$ is flexible, see \cite{DLWWW}. For a simply-connected closed manifold $M$, it is known by \cite[Theorem A, Theorem D]{BGM} that $\alpha(M)=\dim M$ if and only if $M$ is formal. 


For any closed orientable geometric 3-manifold $M$,  $\alpha(M)$ is determined  in \cite{DLWWW}, and a summary of results in \cite{DLWWW} can be found below.
 We will determine  $\alpha(M)$ for non-geometric 3-manifolds $M$ in this paper.

  
 By Kneser--Milnor's prime decomposition theorem and Papakyriakopoulos's sphere theorem, any closed orientable 3-manifold $M$ other than $S^3$ has a prime decomposition (unique up to orders and homeomorphisms)
$$M=(\#_{i=1}^mM_i)\#(\#_{j=1}^n N_j)\#(\#^kS^2\times S^1),$$ 
where each $M_i$ is aspherical, and each $N_j$ has finite fundamental group.

There are eight homogeneous simply-connected complete Riemannian 3-manifolds:
$$\mathbb{H}^3, \widetilde{PSL}(2,\R), \mathbb{H}^2\times \mathbb{E}^1, Sol, Nil, \mathbb{E}^3, S^3, S^2\times \mathbb{E}^1.$$
We say that a closed orientable 3-manifold $M$ is geometric, in the sense of Thurston,  if it supports one of above eight geometries, and otherwise $M$ is non-geometric.

In Thurston's geometrization picture (confirmed by Perelman), 
 it is proved that  a closed orientable 3-manifold $M$ has infinite $D(M)$ if and only if $M$ belongs to one of the following topological families (see \cite[Corollary 4.3]{Wang}).
\begin{itemize}
    \item [(i)] $M$ is covered by a torus bundle over the circle, or

    \item [(ii)] $M$ is covered by $\Sigma\times S^1$ for some closed orientable surface $\Sigma$ with genus $>1$, or

    \item [(iii)] each prime factor of $M$ is covered by $S^3$ or $S^2\times S^1$.
\end{itemize}

 
Therefore we have the following criterion.
    A closed orientable 3-manifold $M$ has positive flexible exponent if and only if $M$ belongs to one of the following two geometric families.
    \begin{itemize}
        \item $M$ admits either $S^3,\ \mathbb E^3,\ S^2\times \mathbb E^1,\ \mathbb H^2\times \mathbb E^1$, Nil, or Sol geometry, or
        \item $M$ is non-geometric, and each prime factor of $M$ admits either $S^3$ or $S^2\times \mathbb E^1$ geometry.
    \end{itemize}

For closed orientable geometric 3-manifolds, the value of $\alpha(M)$ was computed in \cite{DLWWW}:
 \[
      \begin{tabular}{c|c|c|c|c|c}
            \toprule
                  Geometry of $M$  & \makecell{$S^3,\ \mathbb E^3,\ S^2\times \mathbb E^1$} & Nil  & Sol  & \makecell{$\mathbb H^2\times\mathbb E^1$} & $\mathbb H^3, \widetilde {SL_2}$ \\ \midrule
                $\alpha(M)$ & $3$ & $\frac 83$ & $2$  & $1$ & $0.$\\
            \bottomrule
    \end{tabular}
    \]


  

In this paper, we compute  $\alpha(M)$ for all closed, orientable, non-geometric 3-manifolds.
       
       \begin{thm}\label{non-geometric} Suppose $M$ is a closed, orientable, non-geometric 3-manifold. 
        Then 
 \begin{enumerate}        
  \item       $\alpha(M)= 2$ if $M$ is a nontrivial connected sum, and  each prime factor admits the geometry of  
    $S^3$ or $S^2\times \mathbb{E}^1$, 
     \item     $\alpha(M)=0$ otherwise.
    \end{enumerate} 
      
         \end{thm}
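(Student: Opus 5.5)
The plan is to handle the two cases separately. Part (2) follows from the flexibility criterion, and part (1) needs an upper bound and a matching lower bound.

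\emph{Case (2).} Suppose $M$ is non-geometric but not a nontrivial connected sum of $S^3$- and $S^2\times\mathbb E^1$-manifolds. By the criterion stated above, which combines \cite[Corollary 4.3]{Wang} with geometrization, $M$ has finite $D(M)$, so $M$ is inflexible. Since $\alpha(M)>0$ if and only if $M$ is flexible \cite{DLWWW}, we get $\alpha(M)=0$. One remark: a prime non-geometric manifold is never covered by a torus bundle or by $\Sigma\times S^1$, and $\mathbb{RP}^3\#\mathbb{RP}^3$ is geometric. So the only non-geometric flexible manifolds are the nontrivial connected sums in (1), each containing at least one aspherical-free summand pattern that is not $\mathbb{RP}^3\#\mathbb{RP}^3$.

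\emph{Case (1), upper bound $\alpha(M)\le 2$.} Here $M=(\#_j N_j)\#(\#^k S^2\times S^1)$, where each $N_j$ is spherical. Let $f:M\to M$ be $L$-Lipschitz, and write $\pi_1 M$ as a free product $G=\ast_j \pi_1N_j \ast F_k$, with at least two factors and $G$ not $\Z/2\ast\Z/2$. I will first show that a nonzero-degree self-map must be $\pi_1$-surjective up to finite index, and then use the following algebraic fact: such a $G$ (which is large or virtually free nonabelian, or at least has exponential growth) is Hopfian, so the degree is controlled by $H_2$, i.e.\ by the action on the sphere classes. The quantitative input is the following. Take the separating/nonseparating essential spheres $S_i$ of the decomposition, with fixed area. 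The degree can be computed from how $f$ acts on $H^1$ and on the $\pi_2$-classes. Concretely, $\deg f$ should be bounded by the growth of the induced map on $\pi_2(M)\otimes\Q$ restricted to a finite set of generating spheres, or equivalently by the degrees of $f$ restricted to lifts of spheres to the universal cover. Each sphere of area $A$ maps to a sphere of area $\le L^2A$, so its Hurewicz image in $H_2(\widetilde M)$ (the $\Z G$-module $\pi_2$) has coefficients of total size $O(L^2)$. The plan is to express $\deg f$ via the induced map on $\pi_2$, together with a finite-order action on $\pi_1$: for a suitable iterate of $f$, $f_*$ on $\pi_1$ is an automorphism, up to conjugation, of bounded complexity (the finite factors can only be permuted), and the Lipschitz bound on $\pi_1$-words is linear. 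Then $|\deg f|\le C L^2$, which gives $\alpha\le 2$. This step is the main obstacle, namely making rigorous the passage "$\deg f$ is read off from $\pi_2$ images of spheres". I would first try the following. Pass to a finite cover in which $M$ becomes $\#^k S^2\times S^1$ with $k\ge 2$ (lifting an iterate of $f$), and there compute $\deg f$ as the intersection number of $f(S^2\times pt)$ with the dual circle in the cover. This uses the fact that $f_*$ on $H^1$ is bounded by $CL$ and $f_*$ on $H_2$ is bounded by $CL^2$. Combining these with Poincar\'e duality, $\deg f$ is determined by the matrices, with a relation forcing the $H^1$ action to be of bounded size. I expect this bounded-size property to follow from Hopficity: the $H_1$ action is by an automorphism of $F_k$ with bounded Lipschitz growth, and in fact it is of finite order up to inner automorphisms when the degree is nonzero, since the action must be surjective.

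\emph{Case (1), lower bound $\alpha(M)\ge2$.} I would construct maps of degree $\sim L^2$ explicitly. Fix a separating sphere $S$, pinch a collar $S\times[0,1]$, and map it by $S\times[0,1]\to S^3\to M$ via a degree-$d$ map supported on a ball. To keep $\pi_1$ trivial, I would instead wrap the collar as $S^2\times I\to S^2\times I$, using a map $S^2\to S^2$ of degree $d$ with Lipschitz constant $\sim\sqrt d$, followed by a twist. This realizes $f$ equal to the identity off the collar and with $\deg f=1+(\text{contribution})$. For $S^2\times S^1$ summands, the map $S^2\times S^1\to S^2\times S^1$ given by (degree $d$ on $S^2$)$\times$id has degree $d$ and Lipschitz constant $\sqrt d$, and it restricts to the identity near a fiber-arc neighborhood after homotopy; gluing it gives degree $d$. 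For spherical summands, I would instead use the known self-maps of degree $d$ built from $S^2\times I$-collar wrapping, in which the sphere $S$ maps with degree $m$ and degree $m^2$ (Lipschitz $\sim m$), as in the $S^3$ case. Together these give $|\deg f|\gtrsim L^2$, so $\alpha(M)=2$.
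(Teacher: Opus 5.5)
Part (2) is handled as in the paper. The lower bound $\alpha(M)\ge 2$ in part (1), however, has a genuine gap, and it is the main content of the paper: none of your constructions has degree other than $1$. In your collar and summand versions, the map is the identity outside a region $R$ (a collar $S^2\times I$, or a punctured $S^2\times S^1$ summand glued along the ball where it is the identity) and carries $R$ into $R$. Then a point outside $R$ has exactly one preimage, itself, so $\deg f=1$. In the pinch version, the extra term is the degree of a map $S^3\to M$. That degree is $0$, because $\pi_1(M)$ is infinite and the map lifts to the noncompact universal cover.

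The obstruction is structural, not technical. Your glued map would act as the identity on $H_1(\#^kS^2\times S^1)$, so $f^*=\mathrm{id}$ on $H^1$. The identity $f_*(\mathrm{PD}(f^*u))=\deg(f)\,\mathrm{PD}(u)$ then forces $f_*=\deg(f)\cdot\mathrm{id}$ on all of $H_2$. This contradicts $f_*[S_j]=[S_j]$ for the spheres of the untouched summands ($k\ge 2$) unless $\deg f=1$. So degree cannot be created inside one summand; every sphere class must be wrapped $d$ times at once.

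The missing idea is Theorem~\ref{technical}. Inside one Heegaard handlebody $H$ you pack $N=(2n+1)^2$ pairwise disjoint handlebodies $H_{k,n}(G'_{i,j})$, each again a Heegaard handlebody. They are neighborhoods of translates of the perturbed $Y$-graph $G'$ on a $1/n$-grid of the horizontal cross-section; the perturbation $G\to G'$ is exactly what makes them disjoint (Lemma~\ref{packing}). Each one is mapped by an orientation-reversing homeomorphism onto its complement, and the map is the identity elsewhere. Only the radial blow-up $f_3$ costs a factor of $n$, so $\deg F_n=1-N$ and $\lip(F_n)=O(n)=O(\sqrt N)$. Your maps for spherical summands are unspecified and meet the same obstruction. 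The paper instead passes to a finite regular cover $\#^kS^2\times S^1$, chooses the handlebody disjoint from its deck translates, makes $F_n$ equivariant, and descends.

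The upper bound you settle on (a finite cover $\#^kS^2\times S^1$, the duality pairing between $H^1$ and $H^2$, and $O(L^2)$ entries for the $H^2$-action) is the paper's route. But the step that removes the extra factor of $L$ is wrong as stated. The $H^1$-action is neither of bounded size nor of finite order in $\operatorname{Out}(F_k)$: slide homeomorphisms realize $a\mapsto ab^N$, $b\mapsto b$, which have degree $1$ and a matrix entry $N$. Entrywise bounds of $O(L)$ on $A_1$ and $O(L^2)$ on $A_2$, fed into $A_1^tA_2=\pm\deg(f)\,I$, only give $|\deg f|=O(L^3)$. What you need is $\det A_1=\pm 1$, i.e.\ that $f_*$ is an isomorphism on $H_1$. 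This holds because a nonzero-degree self-map is a $\pi_1$-isomorphism (\cite[Corollary 4.1]{Wang}). Directly for $F_k$: the image has finite index $j$ and rank at most $k$, while an index-$j$ subgroup has rank $j(k-1)+1$, so $j=1$, and $F_k$ is Hopfian. ``Surjective up to finite index'' is not enough. Taking determinants then gives $|\deg f|^{\beta_1}=|\det A_2|\le CL^{2\beta_1}$, exactly as in Lemma~\ref{HomologyControlsFlexibleExponent}(2). The same isomorphism property is what lets $f$ lift to a characteristic finite cover in Lemma~\ref{LiftingUpperBoundsExponent}.
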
 
         
We only need to prove Theorem \ref{non-geometric} (1), since the $3$-manifolds in (2) has finite $D(M)$, by  \cite[Corollary 4.3]{Wang}.
         
In Section \ref{basics}, we review and prove some basic properties of the flexible exponent. In Section \ref{upperbound}, we prove that $\alpha(M)\leq 2$ for $M$ as in Theorem \ref{non-geometric} (1).  In Section \ref{finish}, we prove that $\alpha(M)\geq 2$ for $M$ as in Theorem \ref{non-geometric} (1) based on Theorem \ref{technical}, which is a major technical result that constructs non-zero degree self-maps of $\#^kS^2\times S^1$ with controlled Lipschitz constants. Theorem \ref{technical}  will be proved in Section \ref{technicalsection}.
Certain delicate designs of graph packings in $\#^k S^2\times S^1$
make the construction  of   maps with controlled Lipschitz constants   possible.  

Up to some well-known facts, this paper is self-contained.

\section{The flexible exponent}\label{basics}

We start with some basic properties of Lipschitz maps.
 
\begin{prop}\label{LipschitzBasicProperties}
    Let $X$, $Y$, $Z$ be Riemannian manifolds.
    \begin{enumerate}
        \item If $f:X\ra Y$ and $g:Y\ra Z$ are Lipschitz maps, then
        \[
            \lip (g\circ f)\leq \lip (f)\cdot \lip (g).
        \]
        \item For a differentiable map  $f:X\ra Y$, we use $df:TX\ra TY$ to denote the tangent map of $f$. We consider the operator norm
        \[
            \|df\|:=\sup_{p\in X} \|df_p\|,\quad\text{where\quad } \|df_p\|:=\sup_{v\in T_pX,\ \|v\|=1} \|df_p (v)\|_{T_{f(p)}Y}.
          \]   Then we have
        \[
            \lip (f)=\|df\|.
        \]
        \item If a map $f:X\to Y$ is differentiable away from an embedded codimension-$1$ subcomplex $Z\subset X$, then $$\lip(f)=\sup _{p\in X\setminus Z}\|df_p\|.$$ 
    \end{enumerate}
\end{prop}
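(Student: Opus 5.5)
The three parts are standard, so I will argue each directly from the definition of the Riemannian distance as the infimum of lengths of piecewise smooth curves. Throughout I assume $X$, $Y$, $Z$ are connected (as fixed in the introduction) and complete, so that distances are realized or approximated by such curves.

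For (1), I just chain the inequalities. For $u,v\in X$,
\[
d_Z(g(f(u)),g(f(v)))\le \lip(g)\, d_Y(f(u),f(v))\le \lip(g)\lip(f)\, d_X(u,v).
\]
Taking the supremum over $u\neq v$ gives $\lip(g\circ f)\le\lip(f)\lip(g)$.

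For (2), I prove the two inequalities separately.
\begin{itemize}
\item \emph{$\lip(f)\le\|df\|$.} Let $\gamma:[0,1]\to X$ be a piecewise smooth curve from $u$ to $v$. Then $f\circ\gamma$ is piecewise smooth, and its length is $\int\|df(\gamma')\|\le\|df\|\,\mathrm{length}(\gamma)$. Hence $d_Y(f(u),f(v))\le \|df\|\,\mathrm{length}(\gamma)$, and taking the infimum over $\gamma$ gives the bound.
\item \emph{$\|df\|\le\lip(f)$.} Fix $p$ and a unit vector $w\in T_pX$, and let $\gamma(t)=\exp_p(tw)$. For small $t$ we have $d_X(p,\gamma(t))=t$. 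Since $f\circ\gamma$ is a smooth curve with initial velocity $df_p(w)$, we also have $d_Y(f(p),f(\gamma(t)))/t\to\|df_p(w)\|$. The point here is that distance is infinitesimally given by the norm: in normal coordinates at $f(p)$ the metric is Euclidean to first order. So $\|df_p(w)\|\le\lip(f)$, and taking the supremum over $p$ and $w$ finishes.
\end{itemize}

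For (3), I read the statement as assuming $f$ is continuous. The inequality $\sup_{X\setminus Z}\|df_p\|\le\lip(f)$ follows from the local argument in (2), since $X\setminus Z$ is open. For the reverse, set $K=\sup_{X\setminus Z}\|df_p\|$.

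The main obstacle is that a minimizing curve from $u$ to $v$ may run along or repeatedly cross $Z$, where $f$ is not differentiable. I would handle this by perturbation. Since $Z$ is a codimension-$1$ subcomplex, any piecewise smooth path can be perturbed, with arbitrarily small change in length, to a piecewise smooth path $\gamma'$ between points $u',v'$ near $u,v$ that meets $Z$ transversally in finitely many points. One can arrange this, for instance, with piecewise geodesic paths with generic vertices, which meet each top simplex of $Z$ transversely and avoid the codimension-$2$ skeleton.

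On each of the finitely many subintervals where $\gamma'$ avoids $Z$, the estimate from (2) gives $d_Y\le K\cdot\mathrm{length}$. Summing over the subintervals and using the triangle inequality together with continuity of $f$ at the crossing points, I get
\[
d_Y(f(u'),f(v'))\le K\,\mathrm{length}(\gamma').
\]
Letting the perturbation tend to zero, continuity of $f$ and of $d_Y$ gives $d_Y(f(u),f(v))\le K\,d_X(u,v)$, so $\lip(f)\le K$.
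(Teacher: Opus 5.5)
The paper states this proposition without proof, treating all three items as standard, so there is no argument of the paper's to compare yours against. Your proof is correct and is the standard one. Two small comments follow.

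\textbf{Completeness and boundary points.} You never use completeness. The distance is by definition an infimum over piecewise smooth curves. In the second half of (2) you only need the trivial bound $d_X(p,\gamma(t))\le t$ (the length of $\gamma|_{[0,t]}$), not equality. Combine it with the lower bound $d_Y(f(p),f(\gamma(t)))\ge t\|df_p(w)\|-o(t)$, which comes from normal coordinates at $f(p)$.

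Phrased this way, the argument also works at boundary points: take any curve in $X$ with initial velocity $w$, and compare $d_Y$ with the distance in a slight extension of $Y$. This matters because the paper applies the proposition on compact manifolds with boundary and corners, such as $P(3)$ and $Q_1$. In (3) you should then first push a near-minimizing curve into the interior before replacing it by a piecewise geodesic.

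\textbf{The crossing set in (3).} Reading (3) as assuming $f$ continuous is necessary, and you located the real issue correctly. The crossing set must be finite (countable would also do); a closed null set is not enough. The Cantor function is continuous, has zero derivative off a closed null set, and is not Lipschitz.

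Your generic piecewise-geodesic perturbation does deliver finiteness. In a convex neighbourhood, $(a,b,t)\mapsto\gamma_{a,b}(t)$ is a submersion for $0<t<1$. By parametric transversality, for almost every choice of vertices, each segment misses the $(\dim X-2)$-skeleton of $Z$ and meets the open top simplices transversely. Near a point of an open top simplex, $Z$ is a single smooth hypersurface, so the crossings are isolated and hence finite by compactness. This step uses that the simplices of $Z$ are smoothly embedded, which is the piecewise smooth setting of the paper.
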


We will call a map $f:X\to Y$ as in Proposition \ref{LipschitzBasicProperties} (3) a piecewise smooth map.

The 3-manifold $M$ that we consider in this paper do not have naturally associated geometries, since they are non-geometric.
To estimate $\alpha(M)$, we must put a Riemannian metric on $M$. The next lemma shows that $\alpha(M)$ is independent of 
the choice of Riemannian metrics.

\begin{lem}\label{homotopy}
    Suppose two closed connected orientable Riemannian $n$-manifolds $M_1,\ M_2$ are homotopy equivalent to each other, then $\alpha(M_1)=\alpha(M_2)$.
    \end{lem}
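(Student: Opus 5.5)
Take homotopy equivalences $g:M_1\to M_2$ and $h:M_2\to M_1$ with $h\circ g\simeq \mathrm{id}_{M_1}$ and $g\circ h\simeq\mathrm{id}_{M_2}$. By smooth approximation, homotopic to each we may pick smooth representatives. Since $M_1,M_2$ are compact, Proposition \ref{LipschitzBasicProperties} (2) gives finite constants $L_g=\lip(g)$ and $L_h=\lip(h)$. A homotopy equivalence between closed oriented manifolds has degree $\pm1$, and degree is a homotopy invariant, so $\deg(g)\deg(h)=\deg(h\circ g)=1$.

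By symmetry it suffices to show $\alpha(M_2)\le\alpha(M_1)$. Fix $\alpha>\alpha(M_1)$, so there is $C>0$ with $|\deg(F)|\le C\lip(F)^\alpha$ for every $F:M_1\to M_1$. Given any Lipschitz $f:M_2\to M_2$, put $F=h\circ f\circ g:M_1\to M_1$. Multiplicativity of degree gives
\[
\deg(F)=\deg(h)\deg(f)\deg(g)=\deg(f),
\]
and Proposition \ref{LipschitzBasicProperties} (1) gives $\lip(F)\le L_hL_g\lip(f)$. Therefore
\[
|\deg(f)|=|\deg(F)|\le C(L_hL_g)^{\alpha}\lip(f)^\alpha ,
\]
where $C'=C(L_hL_g)^\alpha$ does not depend on $f$. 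Hence $\alpha(M_2)\le\alpha$, and letting $\alpha\downarrow\alpha(M_1)$ yields $\alpha(M_2)\le\alpha(M_1)$.

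One point needs care: when $\lip(f)=0$, $f$ is constant and has degree $0$, so the inequality holds trivially. When $\lip(f)=\infty$ there is nothing to prove. The degree of a merely Lipschitz (continuous) map is defined homologically, so multiplicativity applies without smoothness.

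I expect no real obstacle. The only step to state carefully is that $g$ and $h$ can be chosen smooth, hence Lipschitz, within their homotopy classes, which follows from Whitney approximation on compact manifolds. As a special case, taking $M_1=M_2=M$ with two different metrics and $g=h=\mathrm{id}$ shows that $\alpha(M)$ is independent of the metric.
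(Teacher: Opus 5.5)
Your proof is correct and is essentially the paper's argument. You conjugate a self-map by fixed smooth homotopy equivalences of degree $\pm1$, which preserves $|\deg|$ and multiplies the Lipschitz constant by at most a fixed factor, and then use symmetry; the paper phrases the same step as $P_{M_1}(L)\le P_{M_2}(C^2L)$, while you work directly with the defining inequality. (One cosmetic point: ``fix $\alpha>\alpha(M_1)$, so there is $C$'' tacitly uses that the set of admissible exponents is upward closed. This is true, since a nonzero-degree self-map of a closed manifold has Lipschitz constant at least $1$ by comparing volumes. Alternatively, your argument works verbatim for any admissible $\alpha$ and shows that the admissible set for $M_1$ is contained in the admissible set for $M_2$.)
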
 
    
    Another description of $\alpha(M)$ makes the proof of Lemma \ref{homotopy} more directly.
    For a closed orientable Riemannian $n$-manifold $M$  and number  $L>0$, define 
    \[
        P_{M}(L):=\sup \{|\deg (f)|\mid f:M\ra M \text{ is a Lipschitz map with }\lip (f)\leqslant L\}.
    \]
    
  It is clear that $P_{M}$ is a non-negative function and is also non-decreasing, and
  \[
        \alpha(M)=\inf\{\alpha\geqslant 0\mid P_M(L)\leqslant C\cdot L^\alpha \text{ for some $C>0$}\}.
    \]

\begin{proof}[Proof of Lemma \ref{homotopy}]
 

 Let $f_1: M_1\to M_2$ and $f_2: M_2\to M_1$ be maps realizing the homotopy equivalence. 
By smooth approximation, we can assume that  $f_1$ and $f_2$ are smooth maps of degree $\pm 1$.
Since $M_1$, $M_2$ are compact, we have 

\[\lip (f_1), \lip (f_2)\le  C\] for some constant $C>0$.

{\bf Claim:} $P_{M_1} (L)\le P_{M_2}(C^2L)$. 

Proof: By definition, for any $\epsilon>0$, there exists a map $f: M_1\to M_1$ such that \[\lip (f)\le L, \,\, |\deg (f)|>P_{M_1} (L)-\epsilon.\]
Let $f'=f_1\circ f\circ f_2 : M_2\to M_2$. Then it is easy to see that 
    \[
        \lip  (f')\leqslant \lip (f_1)\cdot \lip (f) \cdot  \lip (f_2) \le C^2 L, \quad  |\deg (f)|=|\deg (f')|.
    \]
    Then  the Claim follows by 
    \[P_{M_1} (L)-\epsilon<|\deg (f)|=|\deg (f')|   \le P_{M_2}(\lip (f')) \le P_{M_2}(C^2L).\]

    The Claim implies that $\alpha(M_1)\le \alpha(M_2)$.
    By symmetry, $\alpha(M_1)\ge \alpha(M_2)$ also holds. So we have $\alpha(M_1)= \alpha(M_2)$.
     \end{proof}


Now we state several facts as lemmas below. The first two will be used to prove $\alpha(M)\le 2$, and the third will be used to
prove $\alpha(M)\ge 2$, for $M$ in Theorem \ref{non-geometric} (1).

\begin{lem}\label{LiftingUpperBoundsExponent}
    Let $p:\tilde M\ra M$ be a finite cover between closed orientable manifolds. Suppose that any non-zero degree self-map $f:M\ra M$ can be lifted to $\tilde f :\tilde M\ra \tilde M$, then $\alpha(M)\leqslant\alpha(\tilde M)$.
\end{lem}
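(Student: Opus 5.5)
The plan is to pull back the Riemannian metric of $M$ to $\tilde M$ and compare Lipschitz constants and degrees of a map and its lift. By Lemma \ref{homotopy}, $\alpha$ does not depend on the choice of metric. So we fix any Riemannian metric $g$ on $M$ and give $\tilde M$ the pulled-back metric $p^*g$. Then $p$ is a local isometry, and in particular $\|dp_x\|=1$ everywhere.

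Given $L\ge 1$, take a Lipschitz map $f:M\to M$ with $\lip(f)\le L$ and $\deg(f)\neq 0$; maps of degree zero do not affect the estimate. By hypothesis $f$ lifts to $\tilde f:\tilde M\to\tilde M$ with $p\circ \tilde f=f\circ p$. We then need two facts about the lift.

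\textbf{Degrees agree.} Since $\deg(p)=d\neq 0$ is the covering degree, multiplicativity of degree applied to $p\circ\tilde f=f\circ p$ gives $d\cdot\deg(\tilde f)=\deg(f)\cdot d$. Hence $\deg(\tilde f)=\deg(f)$.

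\textbf{Lipschitz constants agree.} This is the main point to check. The lift $\tilde f$ is locally Lipschitz with the same local constant as $f$: near any point, $p$ restricts to isometries on small evenly covered balls, and $\tilde f$ is locally $p^{-1}\circ f\circ p$ on them. Where things are differentiable this says $\|d\tilde f_x\|=\|df_{p(x)}\|\le L$. Without differentiability, I would argue directly with paths: the length of $\tilde f\circ\gamma$ equals the length of $f\circ p\circ\gamma$, which is at most $L\cdot\mathrm{length}(\gamma)$, because $p$ preserves lengths of paths. Since $d_{\tilde M}$ is a length metric, taking near-minimizing paths $\gamma$ between $u,v\in\tilde M$ gives $d(\tilde f(u),\tilde f(v))\le L\, d(u,v)$. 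So $\lip(\tilde f)\le L$. (Alternatively, one could smooth $f$ first and use Proposition \ref{LipschitzBasicProperties}(2), but the path argument avoids approximation.)

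Combining the two facts, $|\deg(f)|=|\deg(\tilde f)|\le P_{\tilde M}(L)$, and therefore $P_M(L)\le P_{\tilde M}(L)$ for all $L$. So any bound $P_{\tilde M}(L)\le C L^\alpha$ yields the same bound for $P_M$, which gives $\alpha(M)\le\alpha(\tilde M)$.
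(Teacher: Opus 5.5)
Your proposal is correct and follows the paper's proof: you pull back the metric so that $p$ is a local isometry, then show that the lift $\tilde f$ has the same degree as $f$ and no larger Lipschitz constant. You also supply details the paper leaves implicit, namely multiplicativity of degree and the path-length argument, and you prove only $\lip(\tilde f)\le \lip(f)$ rather than the paper's asserted equality, which is all the conclusion needs.
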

\begin{proof}
     Fix a Riemannian metric for $M$ and let $\tilde g$ be the pull-back Riemannian metric on $\tilde M$ by the covering map. Then for any non-zero degree self-map $f:M\ra M$ and its lift $\tilde f :\tilde M\ra \tilde M$ we have
     \[
        \lip (f)=\lip (\tilde f),\quad \deg (f)=\deg(\tilde f).
     \]
     Then it follows from the definition that $\alpha(M)\leqslant \alpha(\tilde M)$.
\end{proof}


\begin{lem}\label{HomologyControlsFlexibleExponent}
    Suppose $M$ is a closed, orientable, differentiable $n$-manifold, and we fix a Riemannian metric on $M$. Then there exists a positive constant $C>0$, such that for any  non-zero degree differentiable self-map $f:M\ra M$, the following statements hold.
    \begin{enumerate}
        \item If the $k$-th betti number $\beta_k:=\dim_\Q H_k(M;\Q)$ is positive for some integer $k$ and let $f_*:H_k(M;\Q)\ra H_k(M;\Q)$ be the induced linear map, then we have
        \[
            |\det(f_*)|\leqslant C\cdot (\lip (f))^{k\cdot \beta_k},
        \]
        where $\det(f_*)$ is the determinant of $f_*$.
        \item Moreover, if $f$ induces an isomorphism on $H_k(M;\Z)/\operatorname{Tor}$, then we have
    \[
        |\deg (f)|\leqslant C\cdot (\lip (f))^{n-k}.
    \]
    In particular, $\alpha(M)\leqslant n-k$ holds.
    \end{enumerate}
\end{lem}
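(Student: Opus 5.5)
Part (1) comes from a volume-type bound on $k$-dimensional homology, and part (2) from Poincaré duality applied to $f_*$ in degrees $k$ and $n-k$.

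\textbf{Part (1).} I would fix, once and for all, closed differential $k$-forms $\omega_1,\dots,\omega_{\beta_k}$ whose de Rham classes form a basis of $H^k(M;\R)$. I would also fix smooth (or piecewise smooth) $k$-cycles $z_1,\dots,z_{\beta_k}$ whose classes form a $\Q$-basis of $H_k(M;\Q)$. Let $A$ be the matrix of $f_*$ in the basis $[z_j]$. Let $P$ be the period matrix $P_{ij}=\int_{z_j}\omega_i$, which is invertible by de Rham's theorem. Then $\int_{f(z_j)}\omega_i=\int_{z_j}f^*\omega_i$ equals the $(i,j)$ entry of $PA$, so $A=P^{-1}\cdot(\int_{z_j}f^*\omega_i)_{ij}$. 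Pointwise we have $|f^*\omega_i|\le \|df\|^k|\omega_i|$, and by Proposition \ref{LipschitzBasicProperties}(2) $\|df\|=\lip(f)$. Hence each entry $\int_{z_j}f^*\omega_i$ is bounded by $C_0\lip(f)^k$, where $C_0$ depends only on the norms of the $\omega_i$ and the $k$-volumes of the $z_j$. Expanding the determinant as a sum of $\beta_k!$ products of $\beta_k$ entries gives $|\det(f_*)|\le C\lip(f)^{k\beta_k}$. Here we may assume $\lip(f)\ge$ some fixed constant, or else absorb the small cases into $C$, since $\deg f\neq 0$ forces $\lip(f)$ to be bounded below.

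\textbf{Part (2).} This uses Poincaré duality, and I expect it to be the main step. Since $\deg f\neq 0$, the umkehr map shows $f_*$ is surjective on rational homology in every degree, hence an isomorphism. Moreover $f_*\circ f^{!}=\deg(f)\cdot \mathrm{id}$, where $f^{!}=PD\circ f^*\circ PD^{-1}$. The cup product pairing $H^k\otimes H^{n-k}\to H^n$ satisfies $\langle f^*a\cup f^*b,[M]\rangle=\deg(f)\langle a\cup b,[M]\rangle$. Taking determinants of the nondegenerate pairing matrix gives
\[
\det(f^*|_{H^k})\cdot\det(f^*|_{H^{n-k}})=\deg(f)^{\beta_k}.
\]
If $f_*$ is an isomorphism on $H_k(M;\Z)/\operatorname{Tor}$, then $f^*$ is an isomorphism on $H^k(M;\Z)/\operatorname{Tor}$, so $|\det f^*|_{H^k}|=1$. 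Therefore
\[
|\deg f|^{\beta_k}=|\det f_*|_{H_{n-k}}|\le C\lip(f)^{(n-k)\beta_{n-k}}
\]
by part (1) applied in degree $n-k$. Since $\beta_{n-k}=\beta_k$ by Poincaré duality, taking $\beta_k$-th roots gives $|\deg f|\le C'\lip(f)^{n-k}$.

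For the final claim $\alpha(M)\le n-k$, there are two further points. First, smooth approximation of a Lipschitz map changes the Lipschitz constant by at most a factor close to $1$, so it suffices to treat differentiable maps. Second, one needs some care about whether all maps satisfy the isomorphism hypothesis; as stated, the claim applies to the maps satisfying the hypothesis in (2). The delicate points are sign conventions and the integrality of the unimodular determinant, both of which are routine.
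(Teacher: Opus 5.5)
Your proposal is correct and follows the paper's argument. You bound the entries of $f^*$ on $H^k$ by $C\,\lip(f)^k$ by integrating pulled-back closed forms over fixed $k$-cycles and then expand the determinant; for (2) you use $\langle f^*a\smile f^*b,[M]\rangle=\deg(f)\langle a\smile b,[M]\rangle$ with $|\det f^*|_{H^k}|=1$ to get $|\deg f|^{\beta_k}\le C\,\lip(f)^{(n-k)\beta_k}$. The differences are only technical: you use smooth singular cycles and a period matrix where the paper invokes Thom's realization theorem with a cup-product dual basis, which makes your version slightly more elementary. Your aside that $\lip(f)$ is bounded below is unnecessary, since the determinant expansion already gives the bound directly.
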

\begin{proof}
    We fix a fundamental classes $[M]\in H_n(M;\Z)$. The cup product induces a non-degenerate bilinear pairing 
    \[
        x(u,v):=\langle u\smile v,[M]\rangle,\quad u\in H^k(M;\Q),\ v\in H^{n-k}(M;\Q)
    \]
    on the rational cohomology groups. By Poincar\'e duality and the universal coefficient theorem,  the vector spaces $H^{k}(M;\Q)$ and $H^{n-k}(M;\Q)$ have the same $\Q$-dimension, which equals $\beta_k$. We choose a rational basis $\eta_1,\ldots,\eta_{\beta_k}$ for $H^{k}(M;\Q)$ and let $\omega_1,\ldots,\omega_{\beta_k}$ be the dual basis for $H^{n-k}(M;\Q)$, such that
    \[
        x(\eta_i,\omega_j)=\delta_{ij},\quad 1\leqslant i,j\leqslant \beta_k.
    \]
In the following, we use the same notation to denote a de Rham cohomology class and a closed differentiable form representative.
    
By Thom's realization theorem \cite{Thom}, for any $j$, there exists a closed, oriented, immersed $k$-dimensional submanifold $X_j$ (possibly disconnected) of $M$ and an integer $n_j$, such that $\frac 1{n_j}[X_j]$ is Poincar\'e dual to $\omega_j$. (We need $\omega_j\in H^{n-k}(M;\mathbb{Q})$ to guarantee the existence of $X_j$.) Then pairings with $\omega_j$ can be thought of as integrations on these submanifolds, i.e. for any cohomology class $u\in H^{k}(M;\Q)$, we have
\[
    x(u,\omega_j)=\langle u\smile \omega_j,[M]\rangle=\frac{1}{n_j}\int_{X_j}u.
\]

    Under the bases $\{\eta_i\}$ and $\{\omega_j\}$, 
    let $A_1:H^k(M;\Q)\ra H^k(M;\Q)$ and $A_2:H^{n-k}(M;\Q)\ra H^{n-k}(M;\Q)$ be the matrices of the linear maps induced by $f$. Then there is an upper bound for the entries of $A_1$: 
    \[
     |(A_1)_{i,j}|=|x(f^*\eta_i,\omega_j)|=\Big|\frac{1}{n_j}\int_{X_j}f^*\eta_i\Big| \leqslant (\lip (f))^k\cdot\frac{1}{n_j}\int_{X_j}\|\eta_i\|\ d\text{vol}_k\leq C\cdot (\text{Lip}(f))^k.
    \]
    
    Here $\|\eta\|$ denotes the norm of a differential form induced by the Riemannian metric on $M$, and $d\text{vol}_k$ donotes the $k$-dimensional volume form on $X_j$ induced by the Riemannian metric. The constant $C>0$ is the maximum of 
    $$\{\frac{1}{n_j}\int_{X_j}\|\eta_i\|d\text{vol}_k\ |\ i,j=1,\cdots,\beta_k\},$$, which depends on the Riemannian metric, the forms $\eta_i$, the $k$-dimensional submanifolds $X_j$, and the integers $n_j$. It is important to note that $C$ is independent of the map $f$.
   
    By the universal coefficient theorem, the matrix representing $f_*:H_k(M;\mathbb{Q})\to H_k(M;\mathbb{Q})$ is the transpose of $A_1$ and hence we have
    \[
        |\det(f_*)|=|\det (A_1)|\leqslant \beta!\cdot C^{\beta_k}\cdot (\lip (f))^{k\cdot \beta_k}.
    \]
    Up to replacing $\beta_k!\cdot C^{\beta_k}$ by $C$, this proves the first statement.
    
    For the second statement, the same proof also applies to $A_2$, so we have $|\det (A_2)|\leqslant \beta!\cdot C^{\beta_k}\cdot (\lip (f))^{{(n-k)}\cdot \beta_k}$ (for a possibly different constant $C$ independent of $f$). If $f_*$ induces an isomorphism on $H_k(M;\Z)/\operatorname{Tor}$, then $\det(A_1)=\pm 1$.
    It is clear that $x(f^*(u),f^*(v))=\pm\deg (f)\cdot x(u,v)$, which implies that $A_1^t\cdot A_2=\pm \deg (f)\cdot I$ and
    \[
        \det (A_1)\cdot \det (A_2)=\pm(\deg (f))^{\beta_k}.
\]
    So we have
    \[
        |\deg (f)|=|\det (A_2)|^{1/\beta_k}\leqslant (\beta!)^{\frac{1}{\beta_k}}\cdot C\cdot (\lip (f))^{n-k}.
    \]
    Up to changing the constant, we have $|\deg(f)|\leq C\cdot (\text{Lip}(f))^{n-k}$.
    
By a result attributed to Shoen and Uhlenbeck (see the introductions of \cite{Be, Ha}, with $W^{1,p}=W^{1,\infty}$), any Lipschitz map between Riemannian manifolds can be approximated by differentiable maps, with arbitrarily close Lipschitz constants. So the above argument for differentiable maps implies that $\alpha(M)\leq n-k$ holds.
\end{proof}


\section{An upper bound of flexible exponents of non-geometric 3-manifolds}\label{upperbound}


By the discussion in the introduction, Theorem \ref{non-geometric} (2) follows from \cite[Corollary 4.3]{Wang}. So it remains to prove Theorem \ref{non-geometric} (1), which is restated as the following result. 

\begin{thm}\label{nongeometric}
    Suppose $M$ is a closed, orientable, non-geometric 3-manifold and  \[
        M=(\#^mS^2\times S^1)\#(\#_{i=1}^l P_i),
    \] where each $P_i$ is covered by $S^3$ and $|\pi_1(P_i)|>1$.
  Then $\alpha(M)=2$.
    
    \end{thm}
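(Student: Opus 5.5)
The plan is to prove the two inequalities $\alpha(M)\le 2$ and $\alpha(M)\ge 2$ separately.

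\textbf{Upper bound.} I would pass to a finite cover using Lemma \ref{LiftingUpperBoundsExponent}, and then apply Lemma \ref{HomologyControlsFlexibleExponent}(2) with $k=1$.

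If $l=0$, then $M=\#^m S^2\times S^1$ with $m\ge 2$ (non-geometric). Here $H_1(M;\Z)=\Z^m$ and $\pi_1(M)=F_m$. A non-zero degree self-map $f$ is $\pi_1$-surjective, since the image has finite index and a nonzero-degree map onto a manifold with free $\pi_1$ is surjective on $\pi_1$. The group $F_m$ is Hopfian, so $f_*$ is an isomorphism on $\pi_1$, hence on $H_1/\mathrm{Tor}$. Lemma \ref{HomologyControlsFlexibleExponent}(2) then gives $\alpha(M)\le 3-1=2$.

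If $l\ge 1$, then $\pi_1(M)=F_m*G_1*\cdots*G_l$ with the $G_i$ finite. I would take the finite cover $\tilde M$ corresponding to the kernel of the natural map $\pi_1(M)\to G_1\times\cdots\times G_l$ (killing $F_m$). This kernel is free, because a subgroup of a free product meeting no conjugate of the $G_i$ is free by Kurosh. So $\tilde M\simeq \#^r S^2\times S^1$ with $r\ge 2$ when non-geometric; for example $P\#P$ already gives $r\ge 1$, and the exceptional case $\mathbb{RP}^3\#\mathbb{RP}^3$ is geometric and excluded.

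To apply Lemma \ref{LiftingUpperBoundsExponent}, I need every nonzero-degree self-map to lift. A nonzero-degree $f$ is $\pi_1$-surjective onto the free product, and I expect it to be an isomorphism on $\pi_1$ by a Hopfian argument: the group is residually finite and finitely generated, hence Hopfian. The kernel I use is characteristic, being the intersection of kernels of all maps to the finite group of the given isomorphism type, or I could simply replace it with the characteristic subgroup $\bigcap \ker(\phi)$ over homomorphisms to a fixed finite group. A characteristic finite-index subgroup is preserved by the automorphism $f_*$, so $f$ lifts. The cover is then homotopy equivalent to some $\#^r S^2\times S^1$ with $r\ge 2$. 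Lemma \ref{homotopy} together with the case above gives $\alpha(\tilde M)\le 2$, and hence $\alpha(M)\le 2$.

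\textbf{Lower bound.} This is the main obstacle. For each $\epsilon>0$ I would construct self-maps of $M$ of degree growing like $L^{2-\epsilon}$, or better $\gtrsim L^2$, with Lipschitz constant $\le L$.

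The construction has two parts. First, on the $S^2\times S^1$ summands I would use the technical Theorem \ref{technical}: maps of $\#^k S^2\times S^1$ of degree $\sim L^2$ and Lipschitz constant $\sim L$, built from graph packings. On the $P_i$ summands, the map should act essentially as the identity. A degree-$d$ map of $S^2\times S^1$ can be taken to be a degree-$d$ map on the $S^2$ factor crossed with the identity on $S^1$, which costs Lipschitz constant $\sim\sqrt d$. The difficulty is making this compatible with the connected-sum necks and with $\pi_1$ being an isomorphism.

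To handle the $P_i$ summands, I would realize $M$ as $\#^k S^2\times S^1$-type pieces glued with the $P_i$ along small balls. I would require the map from Theorem \ref{technical} to be the identity near chosen small balls, or at least to send them by controlled maps, and then extend by the identity over the $P_i$. Alternatively, I would pinch $M$ to $M\vee S^3$-like pieces.

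When $m=0$, I expect an additional trick to be needed: $\#P_i$ with at least two factors contains an embedded $S^2$ separating the summands. I would use a map that is the identity outside a neighborhood $S^2\times I$ of that sphere and wraps $S^2\times I$ around itself with degree $\sim L^2$. This is the 2-dimensional analogue: $S^2\to S^2$ of degree $L^2$ with Lipschitz constant $L$. However, that alone gives degree $1$ on the top class, so the real degree must come from $\pi_1$-twisting as in the paper's technical construction. I will rely on Theorem \ref{technical} and a reduction to it via pinching and covers, and I expect checking that the Lipschitz constant stays $O(L)$ across the gluing to be the delicate step.
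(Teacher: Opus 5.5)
\textbf{Main gap: the lower bound when $l\ge 1$.} Your plan is to glue the $P_i$ in along balls inside $H'$ and extend $F_n$ by the identity. That does work when $m\ge 2$, because Theorem \ref{technical}(3) gives exactly the identity on $H'$ that you need. It fails elsewhere:
\begin{itemize}
\item The proof of Theorem \ref{technical} cuts the handlebody into $2k-2$ copies of $P(3)$, so it needs $k\ge 2$. For $m=1$ you therefore need a separate construction. You only gesture at one: your $S^2$-factor map could be made the identity on a ball, but you do not carry this out.
\item For $m=0$ (e.g.\ $L(3,1)\#L(3,1)$ or $\#^3\mathbb{R}P^3$) there is no flexible summand at all. ``A reduction via pinching and covers'' is not an argument, and, as you note yourself, a wrap supported in a neck $S^2\times I$ has degree $1$.
\end{itemize}

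The missing idea is an equivariant transfer from a cover, which handles every $l\ge 1$, including $m=0$, uniformly.
\begin{itemize}
\item Take the finite regular cover $\tilde M\cong\#^kS^2\times S^1$, $k\ge 2$, from Lemma \ref{cover}, with deck group $\Gamma$ and the metric pulled back from $M$.
\item Perturb the spine graph of a Heegaard handlebody off its $\Gamma$-translates. Take a thin handlebody neighborhood $H''$ with $\gamma H''\cap H''=\emptyset$ for $\gamma\ne e$; its complement is the other half of a standard splitting.
\item Theorem \ref{technical} then gives $F_n$ with $F_n=\mathrm{id}$ off $H''$.
\item Set $G_n:=\gamma F_n\gamma^{-1}$ on each $\gamma H''$ and $G_n=\mathrm{id}$ elsewhere. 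This map is $\Gamma$-equivariant, $(Cn)$-Lipschitz, and has degree $1-(2n+1)^2|\Gamma|$.
\item $G_n$ descends to $K_n:M\to M$ with the same degree and Lipschitz constant.
\end{itemize}

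\textbf{Upper bound.} This follows the paper's route, but one step is asserted as if it were formal: that every nonzero-degree self-map is $\pi_1$-surjective when $l\ge 1$. Finite-index image plus Hopficity does not give this. On the geometric $\mathbb{R}P^3\#\mathbb{R}P^3=(S^2\times\mathbb{R})/D_\infty$, the map $(x,t)\mapsto(x,(2j+1)t)$ descends to a self-map of degree $2j+1$ whose $\pi_1$-image has index $2j+1$. Non-geometricity enters precisely here, through \cite[Corollary 4.1]{Wang}, which the paper cites.

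Similarly, $r\ge 2$ needs the Euler characteristic count in Lemma \ref{cover}, not just the $P\#P$ remark. For $l=0$ your surjectivity claim is fine once you add the Schreier formula: an index-$d$ subgroup of $F_m$ has rank $d(m-1)+1$, which exceeds $m$ unless $d=1$.
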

    
 We will prove that $\alpha(M)\leq 2$ in this section, and will prove $\alpha(M)\geq 2$ in the next two sections. We first give a more explicit description of the $3$-manifold $M$ in Theorem \ref{nongeometric}.
    
\begin{lem}\label{classify}
Any closed, orientable, non-geometric $3$-manifold $$M=(\#^mS^2\times S^1)\#(\#_{i=1}^l P_i)$$ in Theorem \ref{nongeometric} belongs to one of the following two families:
\begin{enumerate}
\item $M=\#^mS^2\times S^1$ with $m\geq 2$,
\item $M=(\#^mS^2\times S^1)\#(\#_{i=1}^l P_i)$ such that $l\geq 1$, $m+l\geq 2$, and $M\ne \mathbb{R}P^3\#\mathbb{R}P^3$.
\end{enumerate}
\end{lem}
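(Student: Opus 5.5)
This is a case analysis on $(m,l)$, using that $M$ is non-geometric; the work is to list which of these connected sums are geometric. First suppose $l=0$, so $M=\#^mS^2\times S^1$. If $m=0$, then $M=S^3$ (the empty connected sum), which has $S^3$ geometry. If $m=1$, then $M=S^2\times S^1$, which has $S^2\times\mathbb E^1$ geometry. Since $M$ is non-geometric, neither can occur, so $m\ge 2$. This is family (1).

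Now suppose $l\ge 1$. If $m+l=1$, then $m=0$ and $l=1$, so $M=P_1$. This manifold is covered by $S^3$, and by elliptization (Perelman) it is a spherical space form, hence geometric. So $m+l\ge 2$. It remains to exclude $M=\mathbb RP^3\#\mathbb RP^3$, which is the only nontrivial connected sum that admits a geometry: it admits $S^2\times\mathbb E^1$. One sees this as the quotient of $S^2\times\mathbb R$ by the infinite dihedral group generated by $(x,t)\mapsto(-x,-t)$ and $(x,t)\mapsto(-x,2-t)$. The two fixed-point-free involutions give two $\mathbb RP^3$ halves glued along $S^2$. Hence $M\neq \mathbb RP^3\#\mathbb RP^3$, and $M$ lies in family (2).

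The hypothesis "each $P_i$ has $|\pi_1(P_i)|>1$" is exactly what makes the prime decomposition honest. It prevents $S^3$ summands from being counted in $l$, so the invariants $m$ and $l$ are well defined by Kneser--Milnor uniqueness and the case split above is meaningful.

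The only nonroutine step is verifying the $S^2\times\mathbb E^1$ structure on $\mathbb RP^3\#\mathbb RP^3$; the explicit dihedral quotient above settles it. Strictly speaking, the lemma asks only that non-geometric manifolds of this form lie in (1) or (2), so we only needed the implication "geometric in the listed small cases". We never need the converse, that the manifolds in (1) and (2) are non-geometric.
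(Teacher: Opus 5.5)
Your proof is correct and follows the paper's case analysis: $l=0$ forces $m\ge 2$ because $S^3$ and $S^2\times S^1$ are geometric, while $l\ge 1$ forces $m+l\ge 2$ (a single spherical $P_1$ is geometric) and excludes $\mathbb{R}P^3\#\mathbb{R}P^3$ because it is geometric. Your explicit $S^2\times\mathbb{E}^1$ structure, as the quotient of $S^2\times\mathbb{R}$ by the infinite dihedral group, is a more complete justification than the paper's one-line remark that $\mathbb{R}P^3\#\mathbb{R}P^3$ is doubly covered by $S^2\times S^1$.
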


\begin{proof}
If $M$ is a connected sum of $m$ copies of $S^2\times S^1$, since $\#^0S^2\times S^1=S^3$ and $\#^1S^2\times S^1=S^2\times S^1$ are geometric, we must have $m\geq 2$. So $M$ belongs to case (1).

For an $M$ not in case (1), we have $l\geq 1$. If $l=1$, then we must have $m\geq 1$, otherwise $M=P_1$ is geometric. So we have $m+l\geq 2$. Moreover, $\mathbb{R}P^3\# \mathbb{R}P^3$ is geometric since it is doubly covered by $S^2\times S^1$. So any $M$ in Theorem \ref{nongeometric} and not in case (1) belongs to case (2). 
\end{proof}

For any $3$-manifold $M$ in Lemma \ref{classify} (2), we want to construct a finite cover that is homeomorphic to $\#^k S^2\times S^1$ with $k\geq 2$. This follows from the fact that $M$ is a non-geometric $3$-manifold, but we give a more direct proof here.

\begin{lem}\label{cover}
 Suppose $M$ is a 3-manifold and  \[
        M=(\#^mS^2\times S^1)\#(\#_{i=1}^l P_i).
    \] where each $P_i$ is covered by $S^3$ and $|\pi_1(P_i)|>1$. We also assume that $l\geq 1$, $m+l\geq 2$, and $M\ne \mathbb{R}P^3\# \mathbb{R}P^3$. Then $M$ has a finite regular cover $M'$ that is homeomorphic to $\#^k S^2\times S^1$ with $k\geq 2$.
 \end{lem}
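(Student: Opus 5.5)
The plan is to build $M'$ from a surjection $\phi:\pi_1(M)\to G$ onto a finite group $G$ such that $\phi$ is injective on each factor $\pi_1(P_i)$. Then I will count the prime factors of the associated regular cover. Write $\pi_1(M)=F_m * G_1*\cdots*G_l$ with $G_i=\pi_1(P_i)$ finite and nontrivial.

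For a cover $p:M'\to M$ of degree $d$ corresponding to $\ker\phi$, I decompose $M$ along its separating spheres. Concretely, $M$ is obtained from $S^3$ minus $m+l$ pairs or singletons of balls: remove $2m$ balls paired up by gluing (the $S^2\times S^1$ handles), and $l$ balls capped with punctured $P_i$. Put $W=S^3\setminus(2m+l \text{ balls})$. Its preimage in $M'$ is $d$ copies of $W$, since $W$ is simply connected. Over each punctured $P_i^\circ$, injectivity of $\phi|_{G_i}$ implies that the preimage consists of $d/|G_i|$ copies of the universal cover of $P_i^\circ$, which is $S^3$ minus $|G_i|$ balls. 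Gluing punctured spheres along spheres gives a connected sum of copies of $S^2\times S^1$, so $M'\cong\#^kS^2\times S^1$ for some $k$. Its Euler characteristic is $0$, and computing $\operatorname{rank}\pi_1(M')$ via the graph of spaces gives a free group of rank
\[ k=1-d+dm+\sum_{i=1}^l\Big(d-\frac{d}{|G_i|}\Big). \]
This comes from $-\chi$ of the quotient graph: there are $d$ vertices for the copies of $W$ and $d/|G_i|$ vertices for each $P_i$. The edges are $dm$ from the handles and $d$ for each $i$, so $k=1-(d+\sum d/|G_i|)+dm+ld$.

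The existence of $\phi$ comes next. Take $G=G_1\times\cdots\times G_l$ (times the trivial group), with $\phi$ killing $F_m$ and including each $G_i$ as the $i$-th factor. This map is surjective and injective on each $G_i$. Alternatively, to handle cases more uniformly, one can map $F_m$ trivially as well; that is allowed since only injectivity on the $G_i$ matters.

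The main check, and the only real obstacle, is $k\ge 2$ under the hypotheses. Since $|G_i|\ge2$, each summand satisfies $d-d/|G_i|\ge d/2$, so $k\ge 1-d+dm+ld/2$. If $m\ge1$ and $l\ge1$, then $k\ge 1+d/2\ge 2$ because $d\ge2$. If $m=0$ then $l\ge2$. For $l\ge3$ we get $k\ge1+d/2\ge2$. For $l=2$ we get $k=1+d-d/|G_1|-d/|G_2|$ with $d=|G_1||G_2|$, which equals $1+(|G_1|-1)(|G_2|-1)-1+\dots$. Directly, $k=1+|G_1||G_2|-|G_1|-|G_2|=(|G_1|-1)(|G_2|-1)$. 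This is $\ge2$ unless $|G_1|=|G_2|=2$. Since $P_i$ is covered by $S^3$ with $|\pi_1|=2$, each $P_i$ must be $\mathbb{R}P^3$, which is exactly the excluded case $\mathbb{R}P^3\#\mathbb{R}P^3$. This finishes the argument, and the cover is regular since $\ker\phi$ is normal.
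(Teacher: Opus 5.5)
Your proposal is correct and follows the paper's proof: you use the same homomorphism onto $\prod_i \pi_1(P_i)$ killing the free factors, the same sphere-decomposition/dual-graph count giving $k=1+d(m-1)+\sum_i d(1-1/|G_i|)$, and the same case analysis that isolates $\mathbb{R}P^3\#\mathbb{R}P^3$. The differences are cosmetic. You read off freeness and the homeomorphism type directly from a decomposition into punctured $S^3$'s, cutting each $S^2\times S^1$ summand along its nonseparating sphere, rather than invoking Kurosh. Your factorization $k=(|G_1|-1)(|G_2|-1)$ in the case $m=0$, $l=2$ is also a slightly cleaner route than the paper's inequality-plus-integrality argument.
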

 
 \begin{proof}
We have $$
        \pi_1(M)=(*^m \mathbb{Z})*(*_{i=1}^l \pi_1(P_i)),$$
    and we consider the group homomorphism
    $$ \Phi:\pi_1(M)\longrightarrow \Z^{m}\times \pi_1(P_1)\times\cdots\times \pi_1(P_l)\longrightarrow  \pi_1(P_1)\times\cdots\times \pi_1(P_l),$$
    where $\Phi$ is the composition of two obvious homomorphisms. Note that the restriction of $\Phi$ sends $\pi_1(P_i)<\pi_1(M)$ to $\pi_1(P_i)<\pi_1(P_1)\times \cdots \times \pi_1(P_l)$ by identity.
    
    The kernel $H:=\ker \Phi$ is a finite index normal subgroup of $\pi_1(M)$. By the Kurosh Theorem in group theory (\cite{ScW}), we have
$$H\cong  (*^k \mathbb{Z})*(*_{j=1}^a C_j),$$
where each  $C_j$ conjugates to some $C<\pi_1(P_i)<\pi_1(M)$ for some $i$. Since $C< \ker \Phi$ and  $\Phi| : \pi_1(P_i)\to  \pi_1(P_i)$ is the identity, $C$ must be the trivial group and so is $C_j$.  We conclude that $H$ is a free group of rank $k< \infty$, and we want to check $k\geq 2$.

Let $\pi: M'\to M$ be the finite regular cover of $M$ corresponding to $H<\pi_1(M)$, then $M'$ is homeomorphic to $\#^k S^2\times S^1$. Let $D=\text{deg}(\pi)=\Pi_{i=1}^l|\pi_1(P_i)|$. Since $l\geq 1$ and $|\pi_1(P_i)|\geq 2$, $D\geq 2$ holds. We consider $M$ as a union of $m$ copies of $S^2\times S^1\setminus \text{int}(B^3)$, one $P_i\setminus \text{int}(B^3)$ for each $i=1,\cdots,l$, and one $S^3\setminus \bigcup^{m+l}\text{int}(B^3)$, pasting along $(m+l)$ copies of $S^2$. 
 
 In $M'$, each $S^2\times S^1\setminus \text{int}(B^3)$ lifts to $D$ copies of $S^2\times S^1\setminus \text{int}(B^3)$, each $P_i\setminus \text{int}(B^3)$ lifts to $\frac{D}{|\pi_1(P_i)|}$ copies of $S^3\setminus \bigcup^{|\pi_1(P_i)|}\text{int}(B^3)$, the $S^3\setminus \bigcup^{m+n}\text{int}(B^3)$ lifts to $D$ copies of $S^3\setminus \bigcup^{m+n}\text{int}(B^3)$, pasting along $D(m+l)$ copies of $S^2$. The corresponding dual graph of $M'$ has 
 $$(m+1)D+\sum_{i=1}^l \frac{D}{|\pi_1(P_i)|}$$ 
 vertices and $D(m+l)$ edges, with $Dm$ vertices correspond to punctured $S^2\times S^1$ and remaining vertices  correspond to punctured $S^3$.
 
The Euler characteristic of this graph is $(1-l)D+\sum_{i=1}^l\frac{D}{|\pi_1(P_i)|}$, so we have 
\begin{equation}
\begin{aligned}
k&=Dm+\Big[1-(1-l)D-\sum_{i=1}^l\frac{D}{|\pi_1(P_i)|}\Big]= 1+D(m-1)+\sum_{i=1}^lD(1- \frac{1}{|\pi_1(P_i)|}).
\end{aligned}
\end{equation}

If $m\geq 1$, since $l\geq 1$, $D\geq 2$, and $|\pi_1(P_i)|\geq 2$, we have 
$$k\geq 1+\sum_{i=1}^lD(1- \frac{1}{|\pi_1(P_i)|})\geq 1+\sum_{i=1}^l\frac{1}{2}D=1+\frac{D}{2}l\geq 2.$$ 
If $m=0$, then $l\geq 2$, and we have
$$k\geq 1-D+\sum_{i=1}^lD(1- \frac{1}{|\pi_1(P_i)|})\geq 1-D+\frac{D}{2}l\geq 1.$$
The equality holds if and only if $l=2$ and $|\pi_1(P_1)|=|\pi_1(P_2)|=2$, which correspond to the case of $M=\mathbb{R}P^3\# \mathbb{R}P^3$. Since $\mathbb{R}P^3\# \mathbb{R}P^3$ is excluded in this lemma and $k$ is an integer, $k\geq 2$ still holds. The proof of this lemma is done.

 \end{proof}


    
  
  Now we are ready to prove that $\alpha(M)\leq 2$ for $M$ in Theorem \ref{nongeometric}.
    
    \begin{prop}\label{upper bound}
    Suppose $M$ is a closed, orientable, non-geometric 3-manifold and  \[
        M=(\#^mS^2\times S^1)\#(\#_{i=1}^l P_i).
    \] where each $P_i$ is covered by $S^3$.
  Then $\alpha(M)\le 2$ holds.
    
    \end{prop}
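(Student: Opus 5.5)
The plan is to reduce to the case $M=\#^kS^2\times S^1$ with $k\geq 2$, and there to apply Lemma \ref{HomologyControlsFlexibleExponent}(2) with $n=3$, $k=1$.

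\textbf{Reduction to the free case.} By Lemma \ref{classify}, either $M=\#^mS^2\times S^1$ with $m\ge 2$, or $M$ is as in Lemma \ref{cover}. In the latter case, Lemma \ref{cover} gives a finite regular cover $M'\cong \#^kS^2\times S^1$ with $k\ge 2$, corresponding to $H=\ker\Phi$.

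To apply Lemma \ref{LiftingUpperBoundsExponent}, we must show that every non-zero degree self-map $f$ of $M$ lifts to $M'$, i.e. that $f_*(H)\subset H$ (after fixing basepoints). I would argue as follows.
\begin{itemize}
\item A non-zero degree map of $3$-manifolds has image of finite index in $\pi_1$. Since $\pi_1(M)$ is a free product with nontrivial finite factors, it is not finitely co-Hopfian in the needed sense. The more robust route is to use that $H$ is the subgroup generated by all elements of infinite order together with all conjugates of commutator-type relations.
\item Concretely, I would characterise $H$ intrinsically. The map $\Phi$ kills the free factor $*^m\Z$ and all products across different factors. So $\Phi$ factors through the abelianisation-like quotient of each finite factor. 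Torsion elements of $\pi_1(M)$ are conjugate into the $\pi_1(P_i)$ (Kurosh), and $f_*$ sends torsion to torsion.
\item Hence $f_*$ sends each $\pi_1(P_i)$ into a conjugate of some $\pi_1(P_j)$. By a degree argument (nonzero degree forces $f_*$ to be "onto up to finite index" on the finite factors), this is injective on each factor, so it is a bijection on the factors with matching isomorphic groups.
\item It follows that $\Phi\circ f_*$ equals an automorphism of $\prod\pi_1(P_i)$ composed with $\Phi$. This uses that $H$ is the smallest normal subgroup containing $*^m\Z$ and the commutators between distinct factors, together with elements mapping trivially.
\end{itemize}
This lifting step is the main obstacle, and I expect care is needed there. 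If it proves delicate, the fallback is to replace $H$ by a characteristic finite-index torsion-free subgroup: for instance, the intersection of all subgroups of index $\le[\pi_1(M):H]$. This intersection is characteristic, still torsion-free, and hence free by Kurosh. The property needed is invariance under all endomorphisms with finite-index image, and I would verify this via preimages.

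\textbf{The free case $M=\#^kS^2\times S^1$.} Here $H_1(M;\Z)=\Z^k$ and $\pi_1(M)=F_k$. For a non-zero degree $f$, $f_*(\pi_1)$ has finite index. Consider the Poincaré dual intersection of $H_1$ with $H_2$, which is spherical, generated by the $S^2$'s. The cup product $H^1\times H^1\to H^2$ vanishes for $\#^kS^2\times S^1$, because $S^2\times S^1$ has trivial cup square in degree one and connected sum preserves this.

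A map $f$ of degree $d$ satisfies $f^*$ multiplied against $[M]$: for $u\in H^1$ and $v\in H^2$, $\langle f^*u\smile f^*v,[M]\rangle=d\langle u\smile v,[M]\rangle$. Since $H_2$ is generated by embedded spheres and $f$ sends spheres to spheres, one shows that $f_*$ on $H_1$ is an isomorphism. Indeed, $\pi_1$ is free, and $f_*$ has finite-index image in $F_k$. If that index is $>1$, the image has rank $>k$, which contradicts its being the image of a rank-$k$ group. So $f_*$ is surjective on $\pi_1$, hence an isomorphism by Hopficity, and therefore an isomorphism on $H_1(M;\Z)$.

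Lemma \ref{HomologyControlsFlexibleExponent}(2) with $n=3$, $k=1$ then gives $\alpha(M)\le 2$. Combining this with Lemma \ref{LiftingUpperBoundsExponent} and Lemma \ref{homotopy} handles the general case.
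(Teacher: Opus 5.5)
\textbf{Your first lifting argument is wrong.} The bullet-point route tries to prove $f_*(\ker\Phi)\subset\ker\Phi$, i.e.\ $\Phi\circ f_*=\sigma\circ\Phi$ for some automorphism $\sigma$ of $\prod_i\pi_1(P_i)$. This is false in general.

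Take $M=(S^2\times S^1)\#\mathbb{R}P^3$, so $\pi_1(M)=\langle t\rangle*\langle a\mid a^2\rangle$ with $\Phi(t)=1$ and $\Phi(a)=a$. Slide one foot of the $S^2\times S^1$ handle once around the essential loop of the $\mathbb{R}P^3$ summand. This gives a homeomorphism inducing $t\mapsto at$, $a\mapsto a$, and $\Phi(at)=a\neq 1$. So $\ker\Phi$ is not invariant even under homeomorphisms, and no degree argument can save that route. The step ``injective on each factor'' was also unjustified. This is exactly why the paper passes to a further \emph{characteristic} cover.

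\textbf{Your fallback is correct and closes the gap.} Let $G=\pi_1(M)$, $N=[G:\ker\Phi]$, and let $K$ be the intersection of all subgroups of index at most $N$.
\begin{itemize}
\item Since $G$ is finitely generated, there are finitely many such subgroups, so $[G:K]<\infty$.
\item $K\subset\ker\Phi$, so $K$ is free of rank $[\ker\Phi:K](k-1)+1\ge 2$.
\item Take \emph{any} endomorphism $\phi$ of $G$ and any $L$ with $[G:L]\le N$. The map $g\,\phi^{-1}(L)\mapsto \phi(g)L$ injects $G/\phi^{-1}(L)$ into $G/L$. Hence $[G:\phi^{-1}(L)]\le N$, so $K\subset\phi^{-1}(L)$. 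Intersecting over all such $L$ gives $\phi(K)\subset K$.
\end{itemize}
Thus every self-map of $M$ lifts to the cover $\tilde M$ corresponding to $K$, with no degree hypothesis needed.

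Your free-case argument only uses that $\pi_1(\tilde M)$ is free of rank $k'\ge 2$, so you need not identify $\tilde M$ with $\#^{k'}S^2\times S^1$. That argument runs: nonzero degree gives image of finite index $d$; Nielsen--Schreier gives $d(k'-1)+1\le k'$, forcing $d=1$; Hopficity of $F_{k'}$ then gives an isomorphism on $\pi_1$ and hence on $H_1$. The remarks about cup products and spheres there are irrelevant and can be dropped.

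\textbf{Comparison with the paper.} With the fallback, your proof is complete. It shares the paper's skeleton: a finite cover with free $\pi_1$, then Lemma~\ref{LiftingUpperBoundsExponent}, then Lemma~\ref{HomologyControlsFlexibleExponent}(2) with $k=1$. The two key inputs differ. The paper uses \cite[Corollary 4.1]{Wang} twice: to know every nonzero-degree self-map of $M$ is a $\pi_1$-isomorphism, so it preserves a characteristic subgroup, and to know the same for $\tilde M$. You replace the first use with a fully invariant finite-index subgroup, and the second with the Schreier-rank/Hopfian argument. Your route is elementary and self-contained. The paper's is shorter given Wang's results, on which it relies anyway for the finite-$D(M)$ case.
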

\begin{proof} 
We first show that $M$ has a finite characteristic cover $\tilde{M}$ that is homeomorphic to the connected sum of at least two copies of $S^2\times S^1$. If $M$ is a connected sum of $S^2\times S^1$, we take $\tilde{M}=M$. Otherwise, Lemma \ref{cover} provides us a finite cover $M'$ of $M$ such that $M'=\#^k S^2\times S^1$ with $k\geq 2$. Then we take a further finite cover $\tilde{M}$ of $M'$, such that $\tilde{M}\to M$ is a characterisitic finite cover.

    By \cite[Corollary 4.1]{Wang}, any non-zero degree self-map of $\tilde{M}$ must induce an isomorphism on the fundamental group, hence an isomorphism on the first homology. By Lemma \ref{HomologyControlsFlexibleExponent} (2), we know that $\alpha(\tilde{M})\le 2$ holds. 
For any non-zero degree self-map $f:M\ra M$, again by \cite[Corollary 4.1]{Wang}, $f_*:\pi_1(M)\ra \pi_1(M)$ must be an isomorphism. So $f_*$ preserves the characteristic subgroup  $\pi_1(\tilde{M})<\pi_1(M)$, and $f$ lifts to a map $\tilde{f}:\tilde{M}\ra \tilde{M}$. Since the choice of $f$ is arbitrary, Lemma \ref{LiftingUpperBoundsExponent} implies that $\alpha (M)\leq \alpha(\tilde{M})\le 2$ holds.
    \end{proof}

\section{A lower bound of flexible exponents of non-geometric $3$-manifolds}\label{finish}

The genus-$k$ handlebody $H_k$ is a primary object in 3-manifold topology, as shown in Figure \ref{Figure1}. 
Recall that $M_k=\#^k S^2\times S^1$ has its standard genus-$k$ Heegaard decomposition, which is obtained by taking two copies of the genus-$k$ handlebody $H$ and $H'$, and pasting their boundaries by the identity map.

\begin{figure}[htbp]
    \centering
    \def\svgwidth{4in} 
   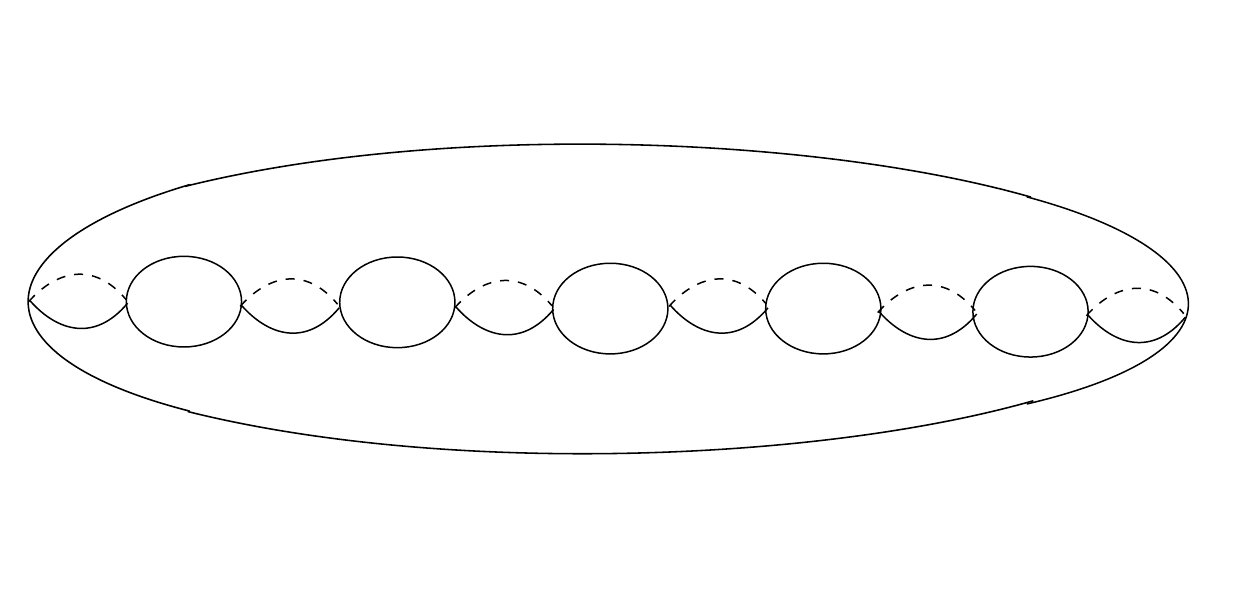
    \caption{The handlebody $H_k$ with $k=5$. Numbered discs decompose $H_k$ to $Y$-shaped solids.}
     \label{Figure1}
\end{figure}

Below  is the main technical result in this paper, whose proof will be given in the next section.
\begin{thm}\label{technical}
Let $M_k=H\cup H'$ be a standard genus-$k$ Heegaard decomposition of $\#^k S^2\times S^1$, and we equip $M_k$ with any Riemannian metric $g$. Then there exists a constant $C>1$ (depending on the metric $g$), such that for any positive integer $n$, there exists a piecewise smooth map $F_n:M_k\to M_k$ that satisfies the following conditions.
\begin{enumerate}
\item $\text{deg}(F_n)=-4n^2-4n=1-(2n+1)^2$.
\item $F_n$ is a $(Cn)$-Lipschitz map.
\item $F_n|_{H'}=id_{H'}$.
\end{enumerate}
\end{thm}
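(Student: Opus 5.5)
The approach is to build $F_n$ as the identity on $H'$ and on a collar of $\partial H$, and to modify it only inside $H$. Every self-map of $M_k$ that is the identity on $H'$ differs from $\mathrm{id}$ by a map $(H,\partial H)\to (M_k,\partial H)$. Its degree equals $1$ plus the algebraic count of how often the interior of $H$ covers $M_k$.

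\emph{Cell structure.} Cut $H$ along the meridian discs of Figure \ref{Figure1} into $Y$-shaped solids, each a $3$-ball. Then $H$ is a thickened graph: discs $D_i$ (2-cells) with $\partial D_i\subset\partial H$, plus $3$-balls.

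\emph{Step 1 (the discs).} Redefine $F_n$ on each meridian disc $D_i$ rel $\partial D_i$. The new image is $D_i$ with spheres attached along thin tubes, using elements of $\pi_2(M_k)\cong \Z[\pi_1 M_k]$-module classes of the meridian spheres $S_j=D_j\cup D'_j$ and their translates. The constraint is that for each $Y$-ball $B$, the new map on $\partial B$ (a union of three modified discs and pieces of $\partial H$) is null-homotopic in $M_k$, i.e. zero in $\pi_2$. Only then does $F_n$ extend over $B$.

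\emph{Step 2 (the degree).} Lift to the universal cover $\widetilde M_k$, which is $S^3$ minus a Cantor set with a tree of separating spheres. There the extension over each ball is a null-homotopy of a sphere built from translates of the $S_j$. The degree contribution is read off from how many times the nullhomotopy sweeps across a chosen fundamental domain, i.e. from intersection numbers of the tubes and spheres with a generic point preimage. I will choose the coefficients, e.g. $\pm n$ copies of $S_1$ and $S_2$ arranged along the words $x_1^{\pm1}x_2^{\pm1}$ in a commutator-like pattern. The aim is that the count is a quadratic expression, namely $-(2n+1)^2+1=-4n(n+1)$. The quadratic growth comes from the sweep of a $(2n+1)\times(2n+1)$ grid of sphere copies. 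The remaining handles of $H$ for $k>2$ are left unchanged.

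\emph{Step 3 (the Lipschitz bound).} This is the main obstacle. Realize the construction geometrically at scale $1/n$. In each $Y$-ball, pack a graph with $O(n^2)$ edges as thin tubes of radius $\sim 1/n$, arranged in an $n\times n$ planar pattern. The degree is $n^2$ while the volume budget $(Cn)^3\mathrm{vol}$ is much larger, so this is plausible. The map is then assembled from standard pieces:
\begin{itemize}
\item each tube of length $O(1)$ and radius $1/n$ maps onto a fixed neighbourhood of a core curve in $M_k$;
\item each small sphere of radius $1/n$ maps onto a fixed $S_j$;
\item the complementary region maps by a fixed collapse.
\end{itemize}
Each piece is a rescaling by factor $n$ of one of finitely many model maps, so each has Lipschitz constant $\le Cn$. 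Proposition \ref{LipschitzBasicProperties}(3) then gives the global bound for the piecewise smooth map.

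The delicate point is designing the packing so that the tubes realizing the $\pi_2$-cancellation in $\partial B$ also fit disjointly at scale $1/n$, and so that the resulting null-homotopy has the prescribed degree. I would first settle the case $k=2$ explicitly and then extend by the identity on the other handles.
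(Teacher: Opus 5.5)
There is a genuine gap: the two steps that make up the theorem are left open. Your frame is right ($F_n=\mathrm{id}$ on $H'$, all modifications inside $H$, $\Theta(n^2)$ tubes of width $\sim 1/n$), but the degree mechanism in Step 2 is wrong. Relative to $H'$, the degree depends only on $F_n$ along the meridian discs. Two extensions over a $3$-cell differ by an element of $\pi_3(M_k)$, which lifts to the noncompact universal cover and so has degree $0$. The degree also depends \emph{linearly} on the $\pi_2$-data. For instance, take one $3$-cell and discs $D_1,\dots,D_k$, and write the modification of $D_i$ as $\sum_j d_{ij}S_j$ with $d_{ij}\in\Z[\pi_1M_k]$. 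Pushing the chain-map equation to $\Z[t^{\pm1}]$ by $x_i\mapsto t$ gives, up to orientation conventions, $\deg F_n-1=\sum_i\varepsilon(d_{ij})$ for every $j$, where $\varepsilon$ is the augmentation. So $|\deg F_n-1|$ is at most the number of sphere copies you attach. Your $\pm n$ copies in a commutator-like pattern therefore give degree $O(n)$, and there is no Whitehead-product effect to exploit.

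You would have to attach about $(2n+1)^2$ copies and then build $O(n)$-Lipschitz extensions over the $Y$-balls, and Step 3 asserts this rather than constructing it. It never identifies which $3$-dimensional pieces of $H$ carry the local degree $-(2n+1)^2$ over $H'$. The extension over the rest of each $Y$-ball is exactly where the $\pi_2$-cancellation must be realized. Its domain depends on $n$ and is not a rescaling of finitely many models, so a ``fixed collapse'' there is unjustified. The ``delicate point'' you defer is the whole theorem.

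The paper avoids $\pi_2$ bookkeeping and null-homotopies altogether. The standard splitting is the double of $H$, so there is a fixed bi-Lipschitz orientation-reversing homeomorphism $h_4$ of $M_k$ that swaps $H_k(2)$ with its complement and fixes $\partial H_k(2)$. Conjugating $h_4$ by $h_3\circ h_1^{-1}\circ h_{2,i,j}^{-1}$ turns it into a homeomorphism of the thin handlebody $H_{k,n}(G'_{i,j})$ onto its complement, rel boundary, with local degree $-1$. Only $h_3$ costs a factor $n$, and $h_3^{-1}$ is $O(1)$-Lipschitz, so each composite is $O(n)$-Lipschitz. The quadratic degree comes from placing $(2n+1)^2$ pairwise disjoint such handlebodies in $H$ and setting $F_n=\mathrm{id}$ elsewhere. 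That packing is the other missing idea. Translates of the straight $Y$-graph by $(i/n,j/n)$ collide near the trivalent vertex, which is why the perturbed graph $G'$ of Lemma~\ref{packing} is needed.
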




At first, for $M=\#^k S^2\times S^1$ with $k\geq 2$, the result that $\alpha(M)\geq 2$ is an easy consequence of Theorem \ref{technical}.

\begin{lem}\label{easy}
For $M=\#^k S^2\times S^1$ with $k\geq 2$, we have $\alpha(M)\geq 2$.
\end{lem}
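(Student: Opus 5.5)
The plan is to feed the maps $F_n$ of Theorem \ref{technical} directly into the definition of $\alpha(M)$ and argue by contradiction. By Lemma \ref{homotopy}, $\alpha(M)$ does not depend on the Riemannian metric, so we may fix any metric $g$ on $M=M_k=H\cup H'$. Theorem \ref{technical} then gives a constant $C>1$ and, for each positive integer $n$, a piecewise smooth map $F_n:M\to M$ with
\[
|\deg(F_n)|=4n^2+4n\geq 4n^2 \quad\text{and}\quad \lip(F_n)\leq Cn .
\]
By Proposition \ref{LipschitzBasicProperties} (3), $F_n$ is an honest Lipschitz map, so it is admissible in the definition of $\alpha(M)$. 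The condition $F_n|_{H'}=id_{H'}$ is not needed here; it will only matter later, when gluing into connected sums.

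Suppose $\alpha<2$ and there is a constant $C'>0$ with $|\deg(f)|\leq C'(\lip(f))^{\alpha}$ for all Lipschitz self-maps $f$ of $M$. Applying this to $F_n$ gives
\[
4n^2\leq |\deg(F_n)|\leq C'(Cn)^{\alpha}=C'C^{\alpha}n^{\alpha}.
\]
That is, $4n^{2-\alpha}\leq C'C^{\alpha}$ for every $n$. This is impossible as $n\to\infty$, because $2-\alpha>0$. Hence no $\alpha<2$ lies in the set whose infimum defines $\alpha(M)$, and so $\alpha(M)\geq 2$. Equivalently, $P_M(Cn)\geq 4n^2$ shows that $P_M$ grows at least quadratically along the sequence $L=Cn$.

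The argument is essentially immediate; all the difficulty sits in Theorem \ref{technical}, which is proved later. The only point to check is that the metric used in Theorem \ref{technical} may be taken to be the fixed one. This holds because Theorem \ref{technical} allows any metric $g$, and in any case Lemma \ref{homotopy} makes the choice of metric irrelevant.
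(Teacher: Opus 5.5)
Your proposal is correct and matches the paper's proof: both fix a metric, feed the $(Cn)$-Lipschitz maps $F_n$ of Theorem \ref{technical} with $|\deg(F_n)|=4n^2+4n$ into the defining inequality, and observe that $4n^2+4n\leq C'(Cn)^{\alpha}$ for all $n$ forces $\alpha\geq 2$. Your remarks on metric independence and on the irrelevance of $F_n|_{H'}=id_{H'}$ for this lemma are accurate.
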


\begin{proof}
We take a Riemannian metric $g$ on $M$. Then Theorem \ref{technical} gives us a sequence of $(Cn)$-Lipschitz maps $F_n:M\to M$ such that $\text{deg}(F_n)=-4n^2-4n$.

Then for any $\alpha\in \mathbb{R}$, if $$4n^2+4n=|\text{deg}(F_n)|\leq C'\cdot (\text{Lip}(F_n))^{\alpha}\leq C'\cdot (Cn)^{\alpha}$$ holds for some $C'$ and all $n$, we must have $\alpha\geq 2$. So $\alpha(M)\geq 2$ holds.
\end{proof}

Now we prove that $\alpha(M)\geq 2$ holds for $M$ as in Lemma \ref{classify} (2).

\begin{thm}\label{sphericalsummand}
Suppose $M=(\#^m S^2\times S^1)\#(\#_{i=1}^l P_i)$ as in Lemma \ref{classify} (2), then $\alpha(M)\geq 2$ holds.
\end{thm}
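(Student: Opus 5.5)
Since $m+l\geq 2$ and $l\geq 1$, I plan to realize $M$ with a genus-$m$-like Heegaard splitting in which one side is untouched, so that Theorem \ref{technical} can be transplanted. The natural setup is the following. If $m\geq 2$ or $m=1$, I write $M=M_m\# P$, where $P=\#_{i=1}^l P_i$ and $M_m=\#^m S^2\times S^1$. If $m=0$, then $M=\#_{i=1}^l P_i$ has no $S^2\times S^1$ summand. That case needs a separate treatment, which I handle in the last step.

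\textbf{Step 1 ($m\geq 2$).} Take the standard Heegaard splitting $M_m=H\cup H'$ and perform the connected sum with $P$ inside a small ball $B\subset \mathrm{int}(H')$. Thus
\[
M=(M_m\setminus B)\cup_{S^2}(P\setminus B).
\]
Let $F_n$ be the maps from Theorem \ref{technical}. Since $F_n|_{H'}=id$, in particular $F_n|_B=id$, so $F_n$ extends by the identity on $P\setminus B$ to a map $G_n:M\to M$. The new map has degree $\deg F_n=-4n^2-4n$, because the degree can be computed at a regular value in $H'\setminus B$, where the local behaviour is unchanged.

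For the Lipschitz bound, fix a metric on $M$ that agrees with a metric $g$ on $M_m\setminus B$ and is arbitrary on $P\setminus B$. Then $G_n$ is piecewise smooth with $\|dG_n\|\leq Cn$ on $M_m\setminus B$ and $\|dG_n\|=1$ on $P\setminus B$, so Proposition \ref{LipschitzBasicProperties}(3) gives $\lip(G_n)\leq Cn$. The argument of Lemma \ref{easy} then yields $\alpha(M)\geq 2$. The metric choice is harmless by Lemma \ref{homotopy}.

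\textbf{Step 2 ($m=1$).} Theorem \ref{technical} is stated for all $k$, but a degree $1-(2n+1)^2$ self-map of $S^2\times S^1$ with $\lip$ of order $n$ is plausible. I nevertheless expect the construction to need $k\geq 2$, since $\alpha(S^2\times S^1)=3$ but the degrees there come from different mechanisms. If $k=1$ is allowed, Step 1 applies verbatim. Otherwise I reduce to Step 1 by finding a different decomposition. Since $l\geq 1$, I would instead pass to a finite cover: by Lemma \ref{cover}, $M$ is covered by $\#^k S^2\times S^1$, but lower bounds do not descend along covers. So I prefer the direct approach and check whether the proof of Theorem \ref{technical} works for $k=1$, or whether a map supported in $H$ can be built by hand.

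\textbf{Step 3 ($m=0$), the main obstacle.} Here no $S^2\times S^1$ summand is available. My plan is to exploit a different structure. Choose $P_1$ and $P_2$ and view $(P_1\setminus B)\cup(P_2\setminus B)$ as containing an essential separating sphere $S$ with a neighborhood $S^2\times[0,1]$. I then try to imitate the handlebody construction, which likely requires reproving a variant of Theorem \ref{technical} where the pinch and wrap happen near the sphere. Alternatively, and I would try this first, I expect the proof of Theorem \ref{technical} to be local to $H$ together with a collar of $\partial H$. One can then look for a submanifold of $M$ homeomorphic to a genus-$2$ handlebody $H$ whose complement behaves like $H'$ for the purposes of the construction, namely so that the map fixes a neighborhood of the complement. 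The difficulty is that in $\#P_i$ such an $H$ has compressible-in-complement issues: the degree formula in Theorem \ref{technical}(1) presumably relies on the specific gluing $H\cup_{id}H'$. Verifying that the construction only uses that the core curves of $H$ bound disks in the complement, which is false here, is where I expect the argument to need genuinely new input.
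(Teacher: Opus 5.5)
Your Step~1 does not define a self-map of $M$. Extending $F_n$ by the identity on $P\setminus B$ only handles the domain. You would also need $F_n(M_m\setminus B)\subset M_m\setminus B$, and this is false. Take a regular value $x_0\in\operatorname{int}(B)\subset H'$. Since $F_n|_{H'}=\mathrm{id}$, the only preimage of $x_0$ in $H'$ is $x_0$ itself, with local degree $+1$. So by Theorem~\ref{technical}(1) the preimages of $x_0$ in $\operatorname{int}(H)$ contribute $-(2n+1)^2\neq 0$. Thus points of $H$ are sent into $B$, which is no longer part of $M$. In the paper's construction each small handlebody $H_{k,n}(G'_{i,j})$ is mapped homeomorphically onto the closure of its complement, so $B$ has $(2n+1)^2$ extra ball preimages in $H$. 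These cannot be remapped into $M$ rel boundary, because the connected-sum sphere is essential in $\pi_2(M)$. Steps~2 and~3 are, as you acknowledge, not yet arguments, so the case $m=0$ (e.g.\ $L(3,1)\# L(3,1)$) is left open in your plan.

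The missing idea is the one you set aside. Lower bounds on $\alpha$ \emph{do} pass down a finite regular cover $\pi:\tilde M\to M$ with deck group $\Gamma$, provided the maps upstairs are $\Gamma$-equivariant. The support condition $F_n|_{H'}=\mathrm{id}$ is exactly what makes equivariance achievable.
\begin{itemize}
\item Take $\tilde M\cong\#^kS^2\times S^1$ with $k\geq 2$ from Lemma~\ref{cover}, with the metric lifted from $M$.
\item Perturb a trivalent spine of a standard Heegaard handlebody so it is disjoint from all its $\Gamma$-translates. Let $H''$ be a thin regular neighborhood with $\gamma H''\cap H''=\emptyset$ for $\gamma\neq e$.
\item $H''\cup\overline{\tilde M\setminus H''}$ is still a standard splitting, so Theorem~\ref{technical} gives $(Cn)$-Lipschitz maps $F_n$ of degree $1-(2n+1)^2$ that are the identity off $H''$.
\item Set $G_n=\gamma\circ F_n\circ\gamma^{-1}$ on $\gamma H''$ and $G_n=\mathrm{id}$ elsewhere.
\end{itemize}
Now there is no target problem, since $F_n(H'')$ is allowed to be all of $\tilde M$. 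The map $G_n$ is well defined because the $\gamma H''$ are disjoint and $F_n=\mathrm{id}$ on $\partial H''$. It is $\Gamma$-equivariant, and it is $(Cn)$-Lipschitz because $\Gamma$ acts by isometries. Counting preimages of a generic point outside $\bigcup_\gamma\gamma H''$ gives $\deg G_n=1-(2n+1)^2|\Gamma|$. $G_n$ descends to $K_n:M\to M$ with the same degree and Lipschitz bound, since $\pi$ is a local isometry. The argument of Lemma~\ref{easy} then gives $\alpha(M)\geq 2$ for all $m$ at once.

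This also settles your worry in Step~2. The proof of Theorem~\ref{technical} cuts $H_k$ into $2k-2$ $Y$-pieces and so genuinely uses $k\geq 2$, which the cover guarantees.
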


\begin{proof}
Our $M$ satisfies the assumption of Lemma \ref{cover}, so there exists a finite regular cover $\pi:\tilde{M}\to M$ that is homeomorphic to $\#^kS^2\times S^1$ for some $k\geq 2$. We denote the deck transformation group by $\Gamma$. 

Let $\tilde{M}=H\cup H'$ be one standard genus-$k$ Heegaard decomposition of $\tilde{M}$, then $H$ is a tubular neighborhood of a $3$-valence graph $G\subset H \subset \tilde{M}$. Since $G\subset \tilde{M}$ is a graph in a $3$-manifold, and $\Gamma$ is finite, we can perturb $G$ such that $\gamma\cdot G\cap G=\emptyset$ holds for any $\gamma\in \Gamma\setminus \{e\}$.  So there exists a small neighborhood $H''$ of $G$ in $\tilde{M}$ such that $\gamma\cdot H''\cap H''=\emptyset$ holds for any $\gamma\in \Gamma\setminus \{e\}$, and $H''$ is homeomorphic to the genus-$k$ handlebody. Note that for any $\gamma\in \Gamma$, $\tilde{M}=\gamma\cdot H''\cup (\tilde{M}\setminus \text{int}(\gamma \cdot H''))$ is also a standard genus-$k$ Heegaard decomposition of $\tilde{M}=\#^kS^2\times S^1$.


We equip $M$ with a Riemannian metric $g$, and it pulls back to a $\Gamma$-invariant metric $\tilde{g}$ on $\tilde{M}$.
By Theorem \ref{technical}, there exists a constant $C>1$, such that for any positive integer $n$, there is a $(Cn)$-Lipschitz map $F_n:\tilde{M}\to \tilde{M}$, satisfying $\text{deg}(F_n)=1-(2n+1)^2$ and $F_n|_{\tilde{M}\setminus H''}=id_{\tilde{M}\setminus H''}$.

Now we use $F_n$ to construct a $\Gamma$-equivariant map $G_n:\tilde{M}\to \tilde{M}$, as 
\begin{equation*}
\begin{aligned}
G_n(x)=\begin{cases}
x & \text{if\ } x\in \tilde{M}\setminus \bigcup_{\gamma\in \Gamma}\text{int}(\gamma \cdot H''),\\
\gamma \cdot F_n(\gamma^{-1}\cdot x) & \text{if\ } x\in \gamma \cdot H'' \text{\ for\ some\ }\gamma\in \Gamma. 
\end{cases}
\end{aligned}
\end{equation*}


We need the fact that $\gamma\cdot H''$'s are disjoint from each other to define $G_n$. Since $F_n$ restricts to identity on $\partial H''$, $G_n$ restricts to the identity on $\gamma\cdot \partial H''$, so $G_n$ is well-defined. By the definition of $G_n$, it is routine to check that it is a $\Gamma$-equivariant map, i.e. $G_n(\gamma\cdot x)=\gamma\cdot G_n(x)$ for any $x\in M$ and $\gamma\in \Gamma$. Since $F_n$ is a piecewise smooth $(Cn)$-Lipschitz map and $\Gamma$ acts on $\tilde{M}$ by isometries, $G_n$ is also a $(Cn)$-Lipschitz map.

By Theorem \ref{technical} (1) (3), for any generic point $x_0 \in \tilde{M}\setminus \bigcup_{\gamma\in \Gamma}\gamma \cdot H''$, we have 
$$F_n^{-1}(x_0)=\{x_0,x_1,\cdots,  x_{(2n+1)^2}\},$$ 
such that $x_i\in H''$ for any $i=1,\cdots, (2n+1)^2$. Moreover, the local degree of $F_n$ at $x_0$ is $1$, and the local degree of $F_n$ at $x_i$ is $-1$ for any $i=1,\cdots, (2n+1)^2$. Then for any generic $x_0\in \tilde{M}\setminus \bigcup_{\gamma\in \Gamma}\gamma \cdot H''$, its $\Gamma$-orbit is contained in $\tilde{M}\setminus \bigcup_{\gamma\in \Gamma}\gamma \cdot H''$ and
$$G_n^{-1}(x_0)=\{x_0\}\cup \Big(\cup_{\gamma\in \Gamma}\gamma\cdot (F_n^{-1}(\gamma^{-1}x_0)\cap H''))\Big).$$
The local degree of $G_n$ at $x_0$ is $1$. Each $\gamma\cdot (F_n^{-1}(\gamma^{-1}x_0)\cap H'')$ has cardinality $(2n+1)^2$, and the local degree of $G_n$ at each such point is $-1$. So we have 
$$\text{deg}(G_n)=1-(2n+1)^2\cdot |\Gamma|.$$

Since $G_n:\tilde{M} \to \tilde{M}$ is $\Gamma$-equivariant, it decends to a map $K_n:M\to M$, such that $\text{deg}(K_n)=\text{deg}(G_n)=1-(2n+1)^2\cdot |\Gamma|$. Since the metric $\tilde{g}$ on $\tilde{M}$ is the pull-back of the metric $g$ on $M$, $K_n$ is also a piecewise smooth $(Cn)$-Lipschitz map.  These facts on the sequence of maps $K_n: M\to M$ imply that $\alpha(M)\geq 2$ holds, as in the proof of Lemma \ref{easy}.
\end{proof}

By Proposition \ref{upper bound}, Lemma \ref{easy}, and Theorem \ref{sphericalsummand}, we know that $\alpha(M)=2$ for all $M$ as in Theorem \ref{nongeometric}. So Theorem \ref{nongeometric} holds, which implies our main result Theorem \ref{non-geometric}.

\section{Construction of Lipschitz maps}\label{technicalsection}
In this section,  we will prove Theorem \ref{technical}.

At first, we observe that we only need to prove Theorem \ref{technical} for one Riemannian metric on $M_k$. 

\begin{lem}\label{onemetric}
If Theorem \ref{technical} holds for one Riemannian metric $g_0$ on $M_k$, then it holds for any Riemannian metric $g$ on $M_k$.
\end{lem}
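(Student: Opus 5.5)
The plan is to use that any two Riemannian metrics on the compact manifold $M_k$ are bi-Lipschitz equivalent via the identity map. Let $g_0$ be the metric for which Theorem \ref{technical} holds, and let $g$ be arbitrary. Since $M_k$ is compact and both metrics are smooth, there is a constant $A\geq 1$ with
\[
A^{-1}\, g_0(v,v)\leq g(v,v)\leq A\, g_0(v,v)\quad \text{for all } v\in TM_k .
\]
This follows by continuity of the ratio $g(v,v)/g_0(v,v)$ on the compact unit sphere bundle of $g_0$. Hence the identity map $\mathrm{id}:(M_k,g)\to(M_k,g_0)$ and its inverse are both $\sqrt{A}$-Lipschitz. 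This is immediate from Proposition \ref{LipschitzBasicProperties} (2): $\|d\,\mathrm{id}\|\leq \sqrt{A}$ in either direction, with lengths of curves compared and path distances taken as infima.

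Next I take the maps $F_n:(M_k,g_0)\to(M_k,g_0)$ given by Theorem \ref{technical} for $g_0$, with constant $C_0>1$. I regard the same underlying maps as self-maps of $(M_k,g)$, i.e.\ I consider $\mathrm{id}\circ F_n\circ \mathrm{id}$. By Proposition \ref{LipschitzBasicProperties} (1),
\[
\lip_g(F_n)\leq \sqrt{A}\cdot \lip_{g_0}(F_n)\cdot \sqrt{A}\leq A C_0\, n .
\]
So with $C:=AC_0>1$ condition (2) holds for $g$. Conditions (1) and (3) are purely topological: the degree, the Heegaard decomposition $H\cup H'$, and the property $F_n|_{H'}=id_{H'}$ do not depend on the metric. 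Piecewise smoothness is also metric-independent. Hence the same family $F_n$ witnesses Theorem \ref{technical} for $g$.

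There is no real obstacle. The only point needing care is the uniform comparison constant $A$, which comes from compactness of the unit sphere bundle. Also, the constant $C$ is allowed to depend on $g$ but not on $n$, and $AC_0$ has that property.
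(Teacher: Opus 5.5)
Your proposal is correct and is the argument the paper has in mind: the paper skips the proof, saying only that one applies the argument of Lemma \ref{homotopy} to the identity maps between $(M_k,g)$ and $(M_k,g_0)$. That is exactly what you do by conjugating $F_n$ with these bi-Lipschitz identities. You also correctly note that the degree, piecewise smoothness, and the condition $F_n|_{H'}=\mathrm{id}_{H'}$ do not depend on the metric.
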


This result is proved by applying the argument in Lemma \ref{homotopy} to $id:(M_k,g)\to (M_k,g_0)$ and $id:(M_k,g_0)\to (M_k,g)$. The proof is straight forward and we skip the proof.

\subsection{Equip a Riemannian metric to $M_k=\#^kS^2\times S^1$}\label{equip}

Since we need to delicately control Lipschitz constants, we want to fix a  concrete metric on $M_k=\#^kS^2\times S^1=H\cup H'$ with its standard genus-$k$ Heegaard splitting. We take $3k-3$ properly embedded disks in $H$ that divide it to $2k-2$ copies of 
the thickened $Y$-shaped $3$-manifold $P$. Figure \ref{Figure1} shows this decomposition for the $k=5$ case, and the $Y$-shaped $3$-manifold $P$ is shown on the left side of Figure \ref{Figure2}. For each $P$, there are three disks $D_1, D_2, D_3$ in $\partial P$, where $D_i$ corresponds to a disk in $H$
numbered by $i\in \{1, 2, 3\}$.  We identify each copy of $(P, D_1, D_2, D_3)$ with the following subset $(P(3), D_1, D_2, D_3)$ of the 3-dimensional Euclidean space defined below.

For any $r>0$, let $P(r)$ be the following closed subset of $\mathbb{R}^3$:
\begin{equation}\label{Yshape}
\begin{aligned}
P(r)=&\ \{(x,y,z)\ |\ x^2+y^2\leq r^2, 0\leq z\leq 8\}\cup \{(x,y,z)\ |\ x^2+(y+z)^2\leq r^2,-8\leq z\leq 0\}\\
 &\ \cup \{(x,y,z)\ |\ x^2+(y-z)^2\leq r^2,-8\leq z\leq 0\}\\
 = &\ \{(x,y,z)\ |\ x^2+y^2\leq r^2, 0\leq z\leq 8\}\cup \{(x,y,z)\ |\ x^2+(y+z)^2\leq r^2,-8\leq z\leq 0, y\geq 0\}\\
 &\ \cup \{(x,y,z)\ |\ x^2+(y-z)^2\leq r^2,-8\leq z\leq 0, y\leq 0\}.
 \end{aligned}
 \end{equation}

For $P(3)$, we have the following three horizontal discs on its boundary: 
$$D_1=\{(x,y,8)\ |\ x^2+y^2\leq 9\},\ D_2=\{(x,y,-8)\ |\ x^2+(y-8)^2\leq 9\},\ D_3=\{(x,y,-8)\ |\ x^2+(y+8)^2\leq 9\}.$$ We use $\partial_{ver}P(3)$ to denote $\partial P(3)\setminus \text{int}(D_1\cup D_2\cup D_3)$, which is the vertical boundary of $P(3)$.
In the second decomposition of $P(r)$ in \eqref{Yshape}, the three closed subsets, denoted by $Q_1$, $Q_2$ and $Q_3$, are disjoint in their interiors.

See the left side of Figure \ref{Figure2} for $P(3)$, $D_1$, $D_2$, $D_3$, and $\partial_{ver}P(3)$, and the right side of Figure \ref{Figure2} for $Q_1$, $Q_2$ and $Q_3$.
  
We take $2k-2$ copies of $P(3)$, and use the identity maps on $D_i, i=1,2,3$ to paste them together by the pattern given in Figure \ref{Figure1}, to obtain $H_k$ back. Then $H_k$ has the Riemannian metric (indeed a flat metric) induced from  the Euclidean metric on $P(3)$.
We extend this Riemannian metric on $H_k$ to a Riemannian metric $g_0$ on $M_k$, and we will prove Theorem
\ref{technical} for this metric.

\begin{figure}[htbp]
    \centering
    \def\svgwidth{5in} 
  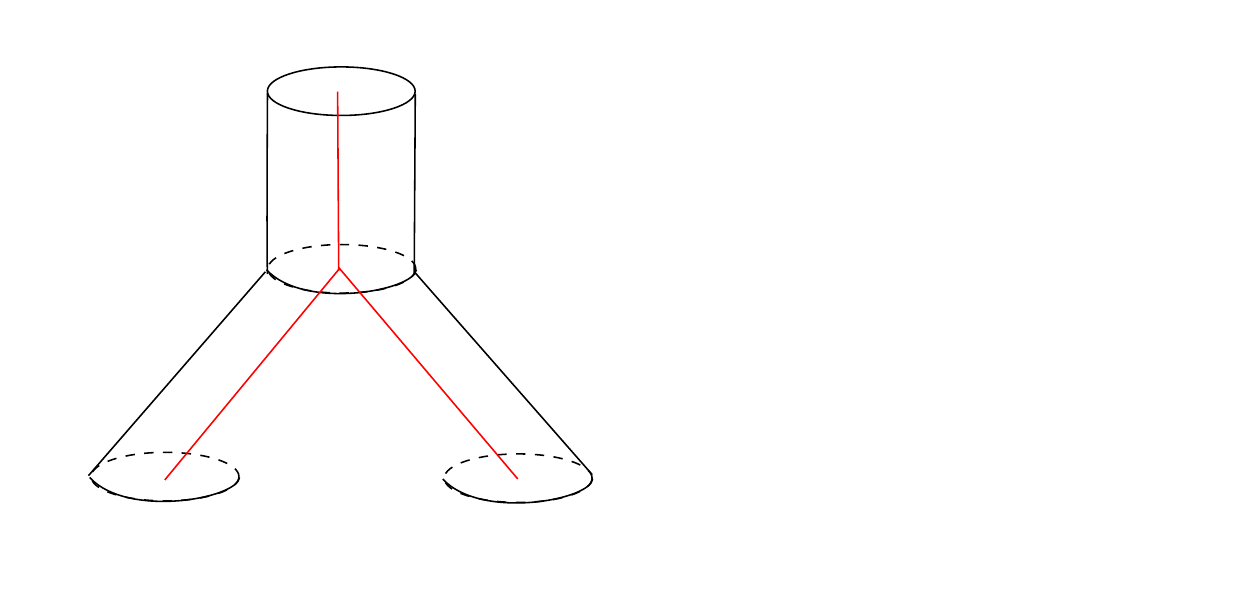
    \caption{A picture of $P(r)$.}
     \label{Figure2}
\end{figure}




The proof of Theorem \ref{technical} consists of a sequence of constructions, and most of our constructions will be done in $P(3)$.

\subsection{Graphs $G$ and $G'_{i,j}$ in the $Y$-shaped 3-manifold $P(3)$ and their packing}\label{graphs}

The intersection of all $P(r)$ with $r>0$ is a $Y$-shaped graph: 
\begin{equation}\label{straightgraph}
\begin{aligned}
G=\{(0,0,z)\ |\ 0\leq z\leq 8\} \cup \{(0,\epsilon z,z)\ |\ -8\leq z\leq 0, \epsilon=\pm 1\},
\end{aligned}
\end{equation}
as shown on the left side of Figure \ref{Figure2}.
For any $r>0$, $P(r)$ is the closed horizontal $r$-neighborhood $\mathcal{N}^{\text{hor}}_r(G)$ of $G$, i.e.
$$P(r)=\mathcal{N}^{\text{hor}}_r(G)=\{(x,y,z)\in \mathbb{R}^3\ |\ \exists(x_0,y_0,z)\in G \text{\ such\ that\ }(x-x_0)^2+(y-y_0)^2\leq r^2\}.$$

Now we fix a positive integer $n$. The following $Y$-shaped graph $G'$ in $P(3)$ is a small perturbation of $G$, and it will play a crucial role in the proof of Theorem \ref{technical}:
\begin{equation}\label{robot}
\begin{aligned}
G'= &\{(0,0,z)\ |\ 0\leq z\leq 8\} \cup \{(\epsilon z,0,z)\ |\ -\frac{1}{10n}\leq z\leq 0,\epsilon=\pm 1\}\\
\cup & \{(-\epsilon\frac{1}{10n},\epsilon(z+\frac{1}{10n}),z)\ |\ -8\leq z\leq -\frac{1}{10n},\epsilon=\pm1\}.
\end{aligned}
\end{equation}
Note that $G'\subset P(\frac{1}{5n})$ holds.

 For any pair of integers $i,j\in [-n,n]\cap \mathbb{Z}$, we define $G'_{i,j}$ to be the horizontal translation of $G'$ by $(\frac{i}{n},\frac{j}{n})$. In particular, $G'=G'_{0,0}$ holds. 
 
 The left side of Figure \ref{Figure3} shows how these  $G'_{i, j}$ stay along the $y$-axis, which look like robots walking on a line. The right side of Figure \ref{Figure3} provides a better view of $G'$.
 
 \begin{figure}[htbp]
    \centering
    \def\svgwidth{5in} 
\begingroup%
  \makeatletter%
  \providecommand\color[2][]{%
    \errmessage{(Inkscape) Color is used for the text in Inkscape, but the package 'color.sty' is not loaded}%
    \renewcommand\color[2][]{}%
  }%
  \providecommand\transparent[1]{%
    \errmessage{(Inkscape) Transparency is used (non-zero) for the text in Inkscape, but the package 'transparent.sty' is not loaded}%
    \renewcommand\transparent[1]{}%
  }%
  \providecommand\rotatebox[2]{#2}%
  \newcommand*\fsize{\dimexpr\f@size pt\relax}%
  \newcommand*\lineheight[1]{\fontsize{\fsize}{#1\fsize}\selectfont}%
  \ifx\svgwidth\undefined%
    \setlength{\unitlength}{595.27559055bp}%
    \ifx\svgscale\undefined%
      \relax%
    \else%
      \setlength{\unitlength}{\unitlength * \real{\svgscale}}%
    \fi%
  \else%
    \setlength{\unitlength}{\svgwidth}%
  \fi%
  \global\let\svgwidth\undefined%
  \global\let\svgscale\undefined%
  \makeatother%
  \begin{picture}(1,0.52380952)%
    \lineheight{1}%
    \setlength\tabcolsep{0pt}%
    \put(0,0){\includegraphics[width=\unitlength,page=1]{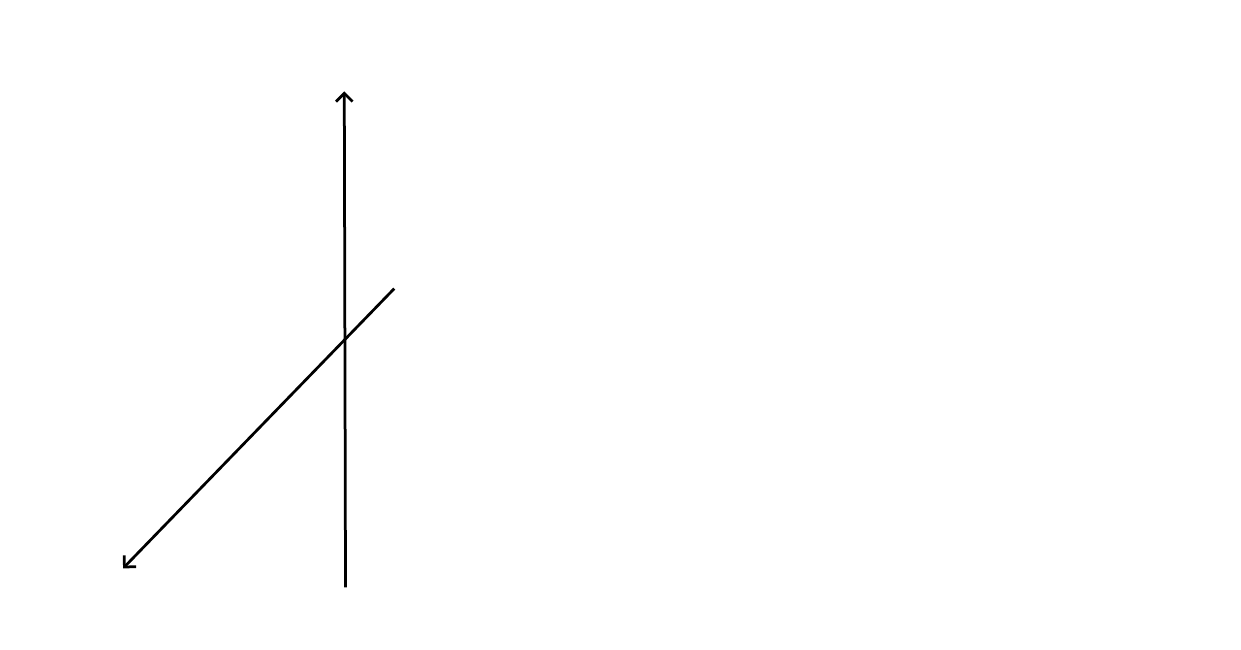}}%
    \put(0.08157585,0.03729033){\color[rgb]{0,0,0}\makebox(0,0)[lt]{\lineheight{1.25}\smash{\begin{tabular}[t]{l}$x$\end{tabular}}}}%
    \put(0.45932016,0.2778752){\color[rgb]{0,0,0}\makebox(0,0)[lt]{\lineheight{1.25}\smash{\begin{tabular}[t]{l}$y$\end{tabular}}}}%
    \put(0.25600051,0.4749797){\color[rgb]{0,0,0}\makebox(0,0)[lt]{\lineheight{1.25}\smash{\begin{tabular}[t]{l}$z$\end{tabular}}}}%
    \put(0,0){\includegraphics[width=\unitlength,page=2]{New_Figure_3.pdf}}%
    \put(0.14521965,0.38744171){\color[rgb]{0,0.50196078,0}\makebox(0,0)[lt]{\lineheight{1.25}\smash{\begin{tabular}[t]{l}$G_{0.-1}$\end{tabular}}}}%
    \put(0.22553963,0.34150617){\color[rgb]{0,0,1}\makebox(0,0)[lt]{\lineheight{1.25}\smash{\begin{tabular}[t]{l}$G_{0,0}$\end{tabular}}}}%
    \put(0.28605165,0.29861872){\color[rgb]{1,0,0}\makebox(0,0)[lt]{\lineheight{1.25}\smash{\begin{tabular}[t]{l}$G_{0,1}$\end{tabular}}}}%
    \put(0,0){\includegraphics[width=\unitlength,page=3]{New_Figure_3.pdf}}%
  \end{picture}%
\endgroup%

    \caption{$G'_{i,j}$ along the $y$-axis and another view of $G'$, with rescaled coordinate.}
     \label{Figure3}
\end{figure}
 
 The following packing property of $G'_{i,j}$ is important for us.

\begin{lem}\label{packing}
For any integers $i,j,i',j'\in [-n,n]\cap \mathbb{Z}$, if $(i,j)\ne (i',j')$, the following holds.
\begin{enumerate}
\item $\mathcal{N}^{hor}_{(100n)^{-1}}(G'_{i,j})$ is contained in $P(2)$.
\item $\mathcal{N}^{hor}_{(100n)^{-1}}(G'_{i,j})$ is disjoint from $\mathcal{N}^{hor}_{(100n)^{-1}}(G'_{i',j'})$. 
\end{enumerate}
\end{lem}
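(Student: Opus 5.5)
The plan is to reduce both statements to planar estimates in horizontal slices. By definition, a point $(x,y,z)$ lies in $\mathcal{N}^{hor}_r(X)$ exactly when $(x,y)$ is within planar distance $r$ of the slice $X\cap\{z\}$. So it suffices to show, for each fixed height $z\in[-8,8]$:
\begin{enumerate}
\item every slice point of $G'_{i,j}$ lies within planar distance $2-\frac{1}{100n}$ of the slice of $G$, since $P(2)=\mathcal{N}^{hor}_2(G)$;
\item for $(i,j)\neq(i',j')$, any slice point of $G'_{i,j}$ and any slice point of $G'_{i',j'}$ are at planar distance greater than $\frac{2}{100n}=\frac{1}{50n}$.
\end{enumerate}

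Next I will write out the slices of $G'$ from \eqref{robot}, with $\delta=\frac{1}{10n}$.
\begin{itemize}
\item For $0\le z\le 8$ the slice is the single point $(0,0)$.
\item For $-\delta\le z\le 0$ it is the two points $(\pm z,0)$.
\item For $-8\le z\le -\delta$ it is the two points $A=(-\delta,\,z+\delta)$ and $B=(\delta,\,-(z+\delta))$.
\end{itemize}
The slice of $G'_{i,j}$ is obtained by adding $(\frac in,\frac jn)$. The slice of $G$ is $(0,0)$ for $z\ge0$ and $(0,\pm z)$ for $z\le 0$.

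For (1), I compare each slice point of $G'_{i,j}$ with a nearby slice point of $G$: $A$ with $(0,z)$, $B$ with $(0,-z)$, and the middle points with $(0,0)$. The offset from $G'$ to $G$ is at most $\sqrt2\,\delta$ in every case. The translation has length at most $\sqrt{i^2+j^2}/n\le\sqrt2$ since $|i|,|j|\le n$. Hence the distance is at most $\sqrt2+\frac{\sqrt2}{10}+\frac{1}{100}<2$.

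For (2), I split according to the height range.
\begin{itemize}
\item For $z\ge0$, the points are $(\frac in,\frac jn)$, and distinct lattice points are at distance $\ge\frac1n$.
\item For $-\delta\le z\le0$, if $j\ne j'$ the $y$-coordinates differ by $\ge\frac1n$. If $j=j'$ and $i\ne i'$, the $x$-coordinates $\frac in\pm z$ and $\frac{i'}{n}\pm z$ differ by at least $\frac1n-2\delta=\frac{4}{5n}$.
\item For $z\le-\delta$, two points of the same type ($A$ with $A$, or $B$ with $B$) differ by a nonzero lattice vector, so their distance is $\ge\frac1n$. For an $A$-point of $G'_{i,j}$ and a $B$-point of $G'_{i',j'}$, the $x$-coordinates differ by $\frac{i-i'}{n}-2\delta$. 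Since $i-i'$ is an integer, $|i-i'-\frac15|\ge\frac15$, so this difference has absolute value $\ge\frac{1}{5n}>\frac{1}{50n}$.
\end{itemize}

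The only point needing care is this last mixed $A$/$B$ comparison with $i=i'$. It is exactly what the horizontal offset $\pm\frac{1}{10n}$ in $x$ was designed for: the two legs of neighbouring robots pass each other with horizontal clearance $\frac{1}{5n}$. Everything else is routine bookkeeping.
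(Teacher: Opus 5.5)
Your proof is correct and is essentially the paper's argument: the same slice-by-slice reduction, the same "offset of $G'$ from $G$ plus translation length $\le\sqrt2$" bound for (1), and the same three height ranges for (2), with the mixed $A$/$B$ case settled by the $x$-gap of at least $\frac{1}{5n}$. One small slip: for $-\frac{1}{10n}\le z<0$ the point $(0,0)$ is not in the slice of $G$, so you should compare $(\pm z,0)$ with $(0,\pm z)$ instead. That gives an offset of $\sqrt2|z|\le\sqrt2\cdot\frac{1}{10n}$, so your bound still holds.
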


\begin{proof}
Since $G'\subset P(\frac{1}{5n})$ and $\|(\frac{i}{n},\frac{j}{n})\|\leq \sqrt{2}$, we have $G'_{i,j}\subset P(\sqrt{2}+\frac{1}{5n})$. Thus $$\mathcal{N}^{hor}_{(100n)^{-1}}(G'_{i,j})\subset P(\sqrt{2}+\frac{1}{5n}+\frac{1}{100n})\subset P(2).$$
This finishes the proof of (1), and we will prove (2) now.

We suppose that $\mathcal{N}^{hor}_{(100n)^{-1}}(G'_{i,j})\cap \mathcal{N}^{hor}_{(100n)^{-1}}(G'_{i',j'})\ne \emptyset$, then we take a point $(x_0,y_0,z_0)$ in the intersection. By the definition of horizontal neighborhood, there exists $(x_1,y_1,z_0)\in G'_{i,j}$ and $(x_2,y_2,z_0)\in G'_{i',j'}$ such that $\|(x_0-x_1,y_0-y_1)\|,\|(x_0-x_2,y_0-y_2)\|\leq \frac{1}{100n}$, so we have 
\begin{equation}\label{4}
\begin{aligned}
\|(x_1-x_2,y_1-y_2)\|\leq \frac{1}{50n}.
\end{aligned}
\end{equation}

We will get a contradiction by considering possible values of $z_0$, as in the definition of $G'$ in equation (\ref{robot}).
\begin{enumerate}
\item If $0\leq z_0\leq 8$, we have $(x_1,y_1)=(\frac{i}{n},\frac{j}{n})$ and $(x_2,y_2)=(\frac{i'}{n},\frac{j'}{n})$. Since $(i,j)\ne(i',j')$, we have $\|(x_1-x_2,y_1-y_2)\|\geq\frac{1}{n}$, which contradicts with equation (\ref{4}). 

\item If $-\frac{1}{10n}\leq z_0\leq 0$, we have $(x_1,y_1)=(\epsilon_1 z_0,0)+(\frac{i}{n},\frac{j}{n})$ and $(x_2,y_2)=(\epsilon_2 z_0,0)+(\frac{i'}{n},\frac{j'}{n})$, with $\epsilon_1,\epsilon_2\in \{1,-1\}$. Since $(i,j)\ne(i',j')$, we have $\|(x_1-x_2,y_1-y_2)\|\geq\|(\frac{i-i'}{n},\frac{j-j'}{n})\|-2|z_0|\geq \frac{1}{n}-\frac{1}{5n}=\frac{4}{5n}$, which contradicts with equation (\ref{4}).

\item If $-8\leq z_0\leq -\frac{1}{10n}$,  we have $(x_1,y_1)=(-\epsilon_1\frac{1}{10n},\epsilon_1(z_0+\frac{1}{10n}))+(\frac{i}{n},\frac{j}{n})$ and $(x_2,y_2)=(-\epsilon_2\frac{1}{10n},\epsilon_2(z_0+\frac{1}{10n}))+(\frac{i'}{n},\frac{j'}{n})$, with $\epsilon_1,\epsilon_2\in \{1,-1\}$.  If $i\ne i'$ or $\epsilon_1\ne \epsilon_2$, then we have $|x_1-x_2|\geq \frac{1}{5n}$, which contradicts with equation (\ref{4}). So we must have $i=i'$, $j\ne j'$, and $\epsilon_1=\epsilon_2$. In this case, we have $|y_1-y_2|\geq \frac{1}{n}$, which still contradicts with equation (\ref{4}).
\end{enumerate}

So the proof of (2) is done.
\end{proof}


Note that the disjointness in Lemma \ref{packing} fails if we replace $G'$ by $G$. This is the reason that we want to consider the graph $G'\subset P(3)$, which is less natural than $G$.

\subsection{Construct bi-Lipschitz homeomorphisms $(P(3),\mathcal{N}_n(G))\to (P(3),\mathcal{N}_n(G'_{i,j}))$}

For any positive integer $n$ and $i,j\in [-n,n]\cap \mathbb{Z}$, we will construct small tubular neighborhoods $\mathcal{N}_n(G)$ and $\mathcal{N}_n(G'_{i,j})$ of $G$ and $G'_{i,j}$, respectively, and will construct a bi-Lipschitz homeomophism $(P(3),\mathcal{N}_n(G),G)\to (P(3),\mathcal{N}_n(G'_{i,j}),G'_{i,j})$. This construction divides to two steps.


To construct homeomorphisms with controlled bi-Lipschitz constants, we will use time-$1$ maps of flows generated by vector fields. So we need the following theorem, which can be found in \cite[Theorem 2]{KSSV}. Here a vector field is said to be complete if it generates a flow $F: M\times \mathbb{R}\to M$ defined on the whole $\mathbb{R}$.

\begin{thm}[\cite{KSSV}]\label{Lipschitz}
Let $(M, g)$ be a Riemannian manifold, let $X$ be a complete vector field on $M$, and let $p,q\in M$. Suppose that $C=\sup_{p\in M}\|\nabla X(p)\|_g<\infty$. Then we have $$d(p(t),q(t))\leq d(p,q)\cdot e^{Ct}$$ for all $t\in [0,\infty)$, where $d(\cdot, \cdot)$ denotes the Riemannian distance on $M$.
\end{thm}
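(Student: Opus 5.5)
Since $X$ is complete, the flow $\phi_t$ exists for all $t$, and I will prove the bound by a Grönwall-type estimate applied to curve lengths: estimating $d(p(t),q(t))$ directly is awkward because $d$ need not be differentiable. Fix $\epsilon>0$ and choose a piecewise smooth curve $\gamma:[0,1]\to M$ from $p$ to $q$ with length $\ell(\gamma)\le d(p,q)+\epsilon$. For each $t\ge 0$ set $\gamma_t(s)=\phi_t(\gamma(s))$. This curve joins $p(t)$ to $q(t)$, so
\[
d(p(t),q(t))\le \ell(\gamma_t)=\int_0^1 \|\partial_s\gamma_t(s)\|\,ds .
\]
It therefore suffices to show $\ell(\gamma_t)\le e^{Ct}\ell(\gamma)$ and then let $\epsilon\to 0$.

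The core step is a pointwise computation. Write $V(s,t)=\partial_s\gamma_t(s)=d\phi_t(\gamma'(s))$. The map $(s,t)\mapsto\phi_t(\gamma(s))$ has $\partial_t=X$, and the coordinate vector fields $\partial_s,\partial_t$ commute. Since the Levi-Civita connection is torsion-free, this gives $\nabla_{\partial_t}V=\nabla_{\partial_s}X=\nabla_V X$. Metric compatibility then yields
\[
\partial_t\|V\|^2=2\langle \nabla_V X,V\rangle\le 2\|\nabla X\|\,\|V\|^2\le 2C\|V\|^2 .
\]
By Grönwall, $\|V(s,t)\|^2\le e^{2Ct}\|V(s,0)\|^2$, so $\|V(s,t)\|\le e^{Ct}\|\gamma'(s)\|$. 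Working with $\|V\|^2$ rather than $\|V\|$ avoids the points where $V=0$. Integrating over $s$ gives $\ell(\gamma_t)\le e^{Ct}\ell(\gamma)\le e^{Ct}(d(p,q)+\epsilon)$, which finishes the argument.

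The main obstacle is regularity, not the estimate itself. One must justify the identity $\nabla_{\partial_t}\partial_s=\nabla_{\partial_s}\partial_t$ along the two-parameter map, which requires $X$ to be at least $C^1$ so that $\phi$ is $C^1$ in $(s,t)$. Where $\gamma$ has corners, the computation is done on each smooth piece separately. If $X$ is only Lipschitz, as for piecewise-smooth fields, I would approximate $X$ by smooth fields $X_k$ with $\sup\|\nabla X_k\|\le C+1/k$. The flows converge locally uniformly by the standard ODE stability estimate, and the inequality passes to the limit. Completeness of $X$ is used only to ensure the flow, and hence $\gamma_t$, is defined for all $t\ge 0$.
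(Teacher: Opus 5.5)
The paper does not prove this statement: it quotes it as \cite[Theorem 2]{KSSV}, and remarks that in the flat setting where it is used an elementary proof should exist but could not be found in the literature.

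Your argument is correct and supplies exactly such a self-contained proof. You flow a nearly length-minimizing piecewise smooth curve, use torsion-freeness to get $D_tV=\nabla_VX$ for $V=d\phi_t(\gamma'(s))$, apply Gr\"onwall to $\|V\|^2$ (which neatly sidesteps zeros of $V$), and integrate in $s$. In the Euclidean case the key identity is just the variational equation $\partial_t\, d\phi_t=dX\circ d\phi_t$.

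Because you bound lengths of flowed curves rather than differentiating $t\mapsto d(p(t),q(t))$ along minimizing geodesics, you use no completeness of $g$, only completeness of $X$. The same computation carries over verbatim to manifolds with boundary such as $P(3)$, provided the flow is globally defined and preserves them. This holds for $\vec v$, which vanishes on $\partial_{ver}P(3)$ and is tangent to $D_1,D_2,D_3$.

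Your last paragraph on merely Lipschitz fields goes beyond the statement, which presupposes that $\nabla X$ exists, and it is only a sketch. On a curved manifold, getting $\sup\|\nabla X_k\|\le C+1/k$ requires mollifying in charts with a partition of unity and controlling the Christoffel and cutoff error terms. With boundary, the approximants must also stay tangent to it.

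That paragraph is still relevant, since the paper applies the theorem to the piecewise smooth field $\vec v$ on the pieces $Q$ and $Q'$. In fact no approximation is needed there. Since $\vec v$ is tangent to $Q\cap Q'$, its flow preserves $Q$ and $Q'$, and your pointwise estimate on $\|\partial_s\gamma_t(s)\|$ holds for almost every $s$.
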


We will only apply Theorem \ref{Lipschitz} to the case that $M=P(3)\subset \mathbb{R}^3$, and $g$ is the standard Euclidean metric. In this case, Theorem \ref{Lipschitz} should have a more elementary proof, but we can not find it in the literature.

To apply Theorem \ref{Lipschitz}, we need to construct a piecewise smooth vector field $\vec{v}$ on $P(3)$. At first, we take a vector field $\vec{w}$ on $P(3)$ defined as:
\begin{equation}\label{vectorfield}
\begin{aligned}
\vec{w}(x,y,z)=
\begin{cases}
(x+y,-x-y,0)& \text{if\ } |x+y|\leq \frac{1}{10n},\\
(\frac{1}{10n}\text{sign}(x+y), -\frac{1}{10n}\text{sign}(x+y), 0) & \text{if\ }|x+y|\geq \frac{1}{10n}.
\end{cases}
\end{aligned}
\end{equation}
Here $\text{sign}(x+y)$ denotes the sign of $x+y$. Our $\vec{w}$ is a piecewise smooth vector field on $P(3)$, and it is tangent to its non-smooth locus $\{(x,y,z)\in P(3)\ |\ |x+y|=\frac{1}{10n}\}.$ It is easy to check that the $\|w\|\leq \frac{1}{5n}$ and $\|\nabla \vec{w}\|\leq 2$ hold away from the non-smooth locus. Note that $\vec{w}$ always has zero $z$-coordinate, and is indendent of $z$. See Figure \ref{Figure4} for the vector field $\vec{w}$ in the $xy$-plane.

Then we take a smooth function $\psi:P(3)\to [0,1]$ such that $\psi(x,y,z)=1$ for any $(x,y,z)\in P(2)$, and $\psi(x,y,z)=0$ for any $(x,y,z)\in P(3)\setminus P(\frac{5}{2})$. Since $P(3)$ is compact, there exists a constant $C_0>0$ such that $\|\nabla \psi\|\leq C_0$ holds.  The desired piecewise smooth vector field $\vec{v}$ is defined by 
$$\vec{v}(x,y,z)=\psi(x,y,z)\cdot \vec{w}(x,y,z).$$ 
Note that $\vec{v}$ vanishes on $\partial_{ver}P(3)$, is tangent to its non-smooth locus, and is tangent to all horizontal planes $z=c$. Moreover, $\nabla \vec{v}$ satisfies the following inequality away from its non-smooth locus:
\begin{equation}\label{6}
\begin{aligned}
\|\nabla \vec{v}\|\leq |\psi|\cdot \|\nabla \vec{w}\|+\|\vec{w}\|\cdot \|\nabla \psi \|\leq 1\cdot 2 +\frac{1}{5n}\cdot C_0\leq C_0+2.
\end{aligned}
\end{equation}

\begin{figure}[htbp]
    \centering
    \def\svgwidth{4in} 
\begingroup%
  \makeatletter%
  \providecommand\color[2][]{%
    \errmessage{(Inkscape) Color is used for the text in Inkscape, but the package 'color.sty' is not loaded}%
    \renewcommand\color[2][]{}%
  }%
  \providecommand\transparent[1]{%
    \errmessage{(Inkscape) Transparency is used (non-zero) for the text in Inkscape, but the package 'transparent.sty' is not loaded}%
    \renewcommand\transparent[1]{}%
  }%
  \providecommand\rotatebox[2]{#2}%
  \newcommand*\fsize{\dimexpr\f@size pt\relax}%
  \newcommand*\lineheight[1]{\fontsize{\fsize}{#1\fsize}\selectfont}%
  \ifx\svgwidth\undefined%
    \setlength{\unitlength}{595.27559055bp}%
    \ifx\svgscale\undefined%
      \relax%
    \else%
      \setlength{\unitlength}{\unitlength * \real{\svgscale}}%
    \fi%
  \else%
    \setlength{\unitlength}{\svgwidth}%
  \fi%
  \global\let\svgwidth\undefined%
  \global\let\svgscale\undefined%
  \makeatother%
  \begin{picture}(1,0.57142857)%
    \lineheight{1}%
    \setlength\tabcolsep{0pt}%
    \put(0,0){\includegraphics[width=\unitlength,page=1]{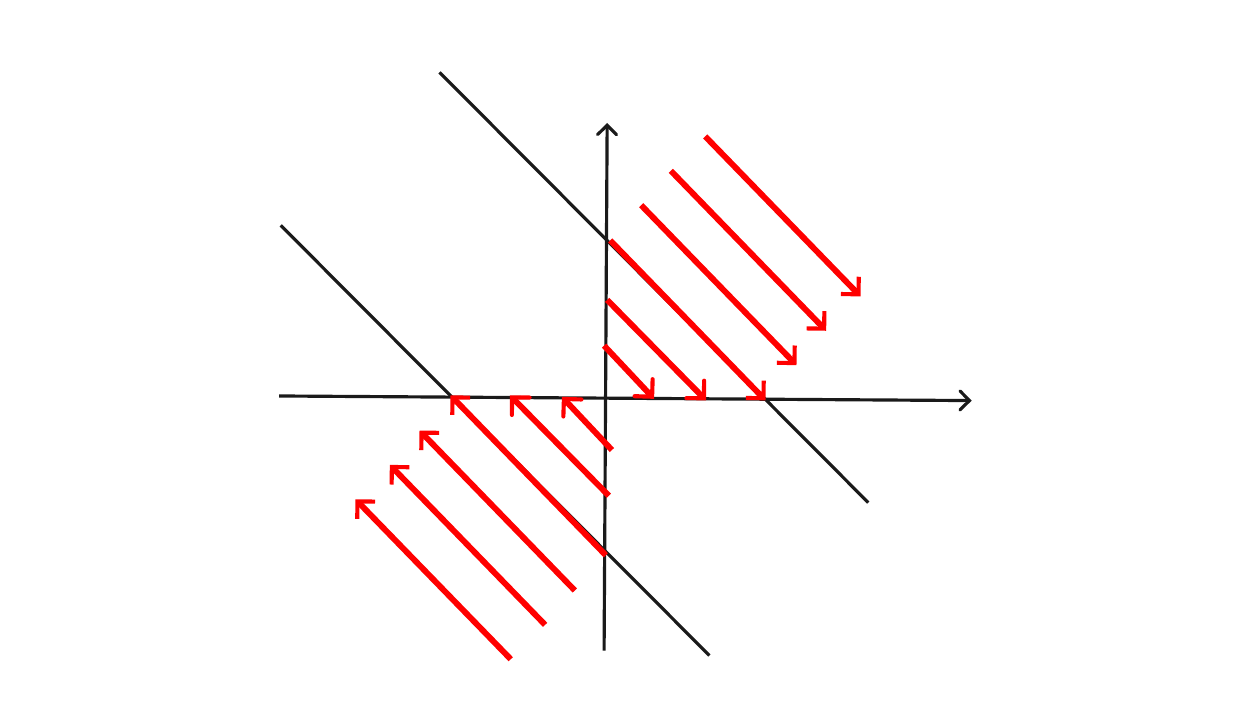}}%
    \put(0.79649164,0.24106275){\color[rgb]{0.10196078,0.10196078,0.10196078}\makebox(0,0)[lt]{\lineheight{1.25}\smash{\begin{tabular}[t]{l}$x$\end{tabular}}}}%
    \put(0.47234882,0.50183871){\color[rgb]{0.10196078,0.10196078,0.10196078}\makebox(0,0)[lt]{\lineheight{1.25}\smash{\begin{tabular}[t]{l}$y$\end{tabular}}}}%
    \put(0.67717438,0.37760113){\color[rgb]{0.10196078,0.10196078,0.10196078}\makebox(0,0)[lt]{\lineheight{1.25}\smash{\begin{tabular}[t]{l}$x+y\geq\frac{1}{10n}$\end{tabular}}}}%
    \put(0.09911166,0.10575445){\color[rgb]{0.10196078,0.10196078,0.10196078}\makebox(0,0)[lt]{\lineheight{1.25}\smash{\begin{tabular}[t]{l}$x+y\leq-\frac{1}{10n}$\end{tabular}}}}%
    \put(0.50274248,0.15219883){\color[rgb]{0.10196078,0.10196078,0.10196078}\makebox(0,0)[lt]{\lineheight{1.25}\smash{\begin{tabular}[t]{l}$|x+y|\leq \frac{1}{10n}$\end{tabular}}}}%
  \end{picture}%
\endgroup%

    \caption{Vector field $\vec{w}$ in the $xy$ plane.}
     \label{Figure4}
\end{figure}

Since $\vec{v}$ vanishes on $\partial_{ver}P(3)$ and is tangent to $D_1, D_2, D_3$, $\vec{v}$ generates a piecewise smooth flow $\Phi: P(3)\times \mathbb{R}\to P(3)$. We use $f_1:P(3)\to P(3)$ to denote the time-$1$ map of $\Phi$, i.e. $f_1(p)=\Phi(p,1)$. We want to check the following properties of $f_1$.

\begin{lem}\label{elementary}
The piecewise smooth map $f_1:P(3)\to P(3)$ satisfies the following properties.
\begin{enumerate}
\item The restriction of $f_1$ to $\partial_{ver}P(3)$ is the identity.
\item $f_1$ is a $C_1$-bi-Lipschitz homeomorphism, where $C_1=e^{C_0+2}>1$ does not depend on $n$.
\item $f_1(G)=G'$.
\item $f_1(P(\frac{1}{100C_1n}))\subset \mathcal{N}^{hor}_{(100n)^{-1}}(G')$.
\end{enumerate}
\end{lem}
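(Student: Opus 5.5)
The plan is to compute $f_1$ explicitly on $P(2)$, where $\psi\equiv 1$, and to get the global statements (1) and (2) from soft arguments.

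\textbf{Properties (1) and (2).} Since $\vec v$ has zero $z$-component, every flow line stays in a horizontal plane $z=c$. Since $\vec v$ vanishes outside $P(\frac52)$, every point of $P(3)\setminus P(\frac52)$, and in particular of $\partial_{ver}P(3)$, is a fixed point. This shows that the flow preserves $P(3)$ and gives (1).

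For (2), note that $\vec w$ is continuous and piecewise linear. Hence $\vec v$ is globally Lipschitz, with Lipschitz constant bounded by $C_0+2$ as in \eqref{6}. So the flow exists, is unique, and $f_1$ is a homeomorphism with inverse the time-$1$ map of $-\vec v$, which satisfies the same bound. For two points $p,q$ joined by a segment inside one smooth region, Theorem \ref{Lipschitz} (or simply Gr\"onwall's inequality for $\frac{d}{dt}|p(t)-q(t)|\le (C_0+2)|p(t)-q(t)|$) gives $|f_1(p)-f_1(q)|\le e^{C_0+2}|p-q|$. Applying this infinitesimally along paths shows that $f_1$ multiplies lengths of curves by at most $C_1$, which bounds the Lipschitz constant for the path metric of $P(3)$. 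The same argument applied to $f_1^{-1}$ gives the bi-Lipschitz bound. The one technical point is that $\vec v$ is only piecewise smooth and $P(3)$ is not convex, so Theorem \ref{Lipschitz} cannot be quoted verbatim; the Lipschitz/Gr\"onwall version avoids this, or one may approximate $\vec v$ by smooth fields with the same gradient bound.

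\textbf{Explicit formula on $P(2)$.} For a trajectory that stays in $P(2)$, put $s=x+y$. Along the flow of $\vec w$, $\frac{d}{dt}(x+y)=0$, so $s$ is constant. The point therefore moves with constant velocity $(\sigma,-\sigma,0)$, where $\sigma=\max(-\frac{1}{10n},\min(s,\frac{1}{10n}))$ is $s$ clipped to $[-\frac1{10n},\frac1{10n}]$. Hence
$$f_1(x,y,z)=(x+\sigma,\;y-\sigma,\;z).$$
This applies to points of $G\subset P(0)$ and to points of $P(\delta)$ with $\delta$ small, because the total displacement is at most $\frac{1}{5n}$ and so the trajectories stay inside $P(2)$.

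\textbf{Property (3).} On the upper leg $(0,0,z)$ we have $s=0$, so these points are fixed. On a lower leg $(0,\epsilon z,z)$ with $-8\le z\le 0$ we have $s=\epsilon z$, and there are two cases.
\begin{itemize}
\item If $|z|\le\frac1{10n}$, then $\sigma=\epsilon z$, and the point goes to $(\epsilon z,0,z)$.
\item If $z\le -\frac1{10n}$, then $\sigma=-\frac{\epsilon}{10n}$, and the point goes to $(-\frac{\epsilon}{10n},\epsilon(z+\frac1{10n}),z)$.
\end{itemize}
These are exactly the pieces of $G'$ in \eqref{robot}. Since $f_1$ is injective and maps each leg of $G$ onto the corresponding leg of $G'$, we get $f_1(G)=G'$.

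\textbf{Property (4).} Let $\delta=\frac{1}{100C_1n}$ and take $p\in P(\delta)$. Choose $g\in G$ at the same height with horizontal distance at most $\delta$. The horizontal segment from $g$ to $p$ lies in the horizontal $\delta$-disk around $g$, which is contained in $P(\delta)\subset P(2)$. So the Euclidean Lipschitz bound from (2) applies and gives $|f_1(p)-f_1(g)|\le C_1\delta=\frac1{100n}$. Since $f_1$ preserves $z$ and $f_1(g)\in G'$ by (3), this horizontal distance bound says exactly that $f_1(p)\in\mathcal N^{hor}_{(100n)^{-1}}(G')$.

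The main obstacle is the rigorous Lipschitz estimate in (2), because of the nonsmooth locus of $\vec v$ and the nonconvexity of $P(3)$. The other three properties follow directly from the explicit formula for $f_1$ on $P(2)$.
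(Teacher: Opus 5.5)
Your proposal is correct and follows essentially the same route as the paper. It gets (1) from the vanishing of $\vec v$ on $\partial_{ver}P(3)$, (3) from the same explicit integral curves (your conserved quantity $x+y$ just packages the paper's ODE solutions), and (4) from $z$-preservation together with (2) and (3). The only difference is in (2): the paper applies Theorem \ref{Lipschitz} separately on the flow-invariant smooth regions $Q=\{|x+y|\le\frac{1}{10n}\}$ and $Q'=\{|x+y|\ge\frac{1}{10n}\}$, whereas you use the global Lipschitz bound of the piecewise-linear field with Gr\"onwall along paths, which treats the non-smooth locus and the non-convexity of $P(3)$ somewhat more carefully.
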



\begin{proof}
Since the vector field $\vec{v}$ vanishes on $\partial_{ver}P(3)$, $\Phi(x,t)=x$ holds for any $x\in \partial_{ver}P(3)$. So $f_1$ restricts to the identity on $\partial_{ver}P(3)$, thus (1) holds.

Note that the vector field $\vec{v}$ is smooth on 
$$Q=P(3)\cap \{(x,y,z)\ |\ |x+y|\leq \frac{1}{10n}\}\ \text{and}\ Q'=P(3)\cap \{(x,y,z)\ |\ |x+y|\geq \frac{1}{10n}\},$$ 
and is tangent to its non-smooth locus $Q\cap Q'$. So both the flow $\Phi$ and the map $f_1$ preserve $Q$ and $Q'$. Since $\|\nabla \vec{v}\|\leq C_0+2$ away from the non-smooth locus, Theorem \ref{Lipschitz} implies that the restrictions of $f_1$ to $Q$ and $Q'$ are both $C_1$-Lipschitz for $C_1=e^{C_0+2}$, thus $f_1$ is a $C_1$-Lipschitz map. Since the time-$1$ map of $-\vec{v}$ is $f_1^{-1}$, the same argument implies that $f_1^{-1}$ is $C_1$-Lipschitz. So $f_1$ is a $C_1$-bi-Lipschitz homeomorphism, thus (2) holds.

Now we work on item (3), which is checked by solving elementary ODEs.  For any $(x_0,y_0,z_0)\in G$, we want to solve the integral curve $\gamma(t)=(x(t),y(t),z(t))$ of $\vec{v}$ such that $\gamma(0)=(x_0,y_0,z_0)$. If $0\leq z_0\leq 8$, then $(x_0,y_0,z_0)=(0,0,z_0)$ and it is a zero point of $\vec{v}$. So $\gamma(1)=(0,0,z_0)\in G'$ holds. If $-\frac{1}{10n}\leq z_0\leq 0$, then $(x_0,y_0,z_0)=(0,\epsilon z_0,z_0)$. The integral curve is given by $\gamma(t)=(\epsilon z_0t,\epsilon z_0(1-t),z_0)$, and $\gamma(1)=(\epsilon z_0,0,z_0)\in G'$. Note that $\gamma(t)$ always lies in $Q\cap P(2)$ in this case. If $-8\leq z_0\leq -\frac{1}{10n}$, then $(x_0,y_0,z_0)=(0,\epsilon z_0,z_0)$. The integral curve is given by $\gamma(t)=(-\frac{1}{10n}\epsilon t,\epsilon (z_0+\frac{1}{10n}t),z_0)$, and $\gamma(1)=(-\frac{1}{10n}\epsilon,\epsilon (z_0+\frac{1}{10n}),z_0)\in G'$. Note that $\gamma(t)$ still lies in $Q'\cap P(2)$ in this case. The proof of (3) is done.

Note that $f_1$ preserves the $z$-coordinate of each point. So (2) and (3) implies that $f_1$ maps $P(\frac{1}{100C_1n})=\mathcal{N}^{hor}_{(100C_1n)^{-1}}(G)$ into $\mathcal{N}^{hor}_{(100n)^{-1}}(G')$, thus (4) holds.
\end{proof}

Now we take $\mathcal{N}_n(G)=P(\frac{1}{100C_1n})=\mathcal{N}^{hor}_{(100C_1n)^{-1}}(G)$ to be our desired small neighborhood of $G$, and
we take $\mathcal{N}_n(G')=f_1(P(\frac{1}{100C_1n}))$ to be our desired small neighborhood of $G'$. Note that we do not have an explicit formula that defines $\mathcal{N}_n(G')$. Then Lemma \ref{elementary} implies that $f_1:(P(3),\mathcal{N}_n(G),G)\to (P(3),\mathcal{N}_n(G'),G')$ is a piecewise smooth $C_1$-bi-Lipschitz homeomorphism, and $\mathcal{N}_n(G')=f_1(P(\frac{1}{100C_1n})) \subset \mathcal{N}^{hor}_{(100n)^{-1}}(G')$ holds.

For any $i,j\in [-n,n]\cap \mathbb{Z}$, let $\mathcal{N}_n(G'_{i,j})$ be the horizontal translation of $\mathcal{N}_n(G')$ by $(\frac{i}{n},\frac{j}{n})$. Then Lemma \ref{elementary} (4) implies $\mathcal{N}_n(G'_{i,j})\subset \mathcal{N}^{hor}_{(100n)^{-1}}(G'_{i,j})$, and Lemma \ref{packing} (2) implies that $\mathcal{N}_n(G'_{i,j})$ and $\mathcal{N}_n(G'_{i',j'})$ are disjoint from each other if $(i,j)\ne (i',j')$.

Now we prove that $(P(3),\mathcal{N}_n(G'),G')$ and $(P(3),\mathcal{N}_n(G'_{i,j}),G'_{i,j})$ are bi-Lipschitz homeomorphic to each other.

\begin{lem}\label{translation}
There exists a constant $C_2>1$ indepent of $n$, such that for any $i,j\in [-n,n]\cap \mathbb{Z}$, there exists a smooth $C_2$-bi-Lipschitz homeomorphism $f_{2,i,j}:(P(3),\mathcal{N}_n(G'),G')\to (P(3),\mathcal{N}_n(G'_{i,j}),G'_{i,j})$ that restricts to the identity on $\partial_{ver}P(3)$.
\end{lem}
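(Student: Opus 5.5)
The plan is to realize $f_{2,i,j}$ as the time-$1$ map of a flow of a compactly supported horizontal vector field that implements the translation by $(\frac{i}{n},\frac{j}{n})$ on a region containing $\mathcal{N}_n(G')$. We then control the bi-Lipschitz constant via Theorem \ref{Lipschitz}, exactly as in Lemma \ref{elementary}.

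Concretely, keep the cutoff function $\psi$ from before, with $\psi\equiv 1$ on $P(2)$, $\psi\equiv 0$ off $P(\frac52)$, and $\|\nabla\psi\|\le C_0$. Define the smooth vector field
$$\vec{u}_{i,j}(x,y,z)=\psi(x,y,z)\cdot\Big(\frac{i}{n},\frac{j}{n},0\Big).$$
It vanishes on $\partial_{ver}P(3)$ and is horizontal, hence tangent to $D_1,D_2,D_3$, so it generates a flow on $P(3)$. Its covariant derivative satisfies
$$\|\nabla\vec{u}_{i,j}\|\le \|\nabla\psi\|\cdot\|(\tfrac{i}{n},\tfrac{j}{n})\|\le \sqrt2\, C_0,$$
which is independent of $n,i,j$. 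Theorem \ref{Lipschitz}, applied to $\vec{u}_{i,j}$ and to $-\vec{u}_{i,j}$, shows that the time-$1$ map $f_{2,i,j}$ is a smooth $C_2$-bi-Lipschitz homeomorphism with $C_2=e^{\sqrt2 C_0}$. It restricts to the identity on $\partial_{ver}P(3)$ because the field vanishes there.

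It remains to check that $f_{2,i,j}$ acts as the genuine translation $\tau_{i,j}$ by $(\frac in,\frac jn)$ on $\mathcal{N}_n(G')$, which contains $G'$. Let $p\in \mathcal{N}_n(G')\subset \mathcal{N}^{hor}_{(100n)^{-1}}(G')$. For $t\in[0,1]$, the horizontal translate of $p$ by $t(\frac in,\frac jn)$ lies in $\mathcal{N}^{hor}_{(100n)^{-1}}$ of the translate of $G'$ by $t(\frac in,\frac jn)$. The argument of Lemma \ref{packing}(1), which uses only $\|t(\frac in,\frac jn)\|\le\sqrt2$, shows this set lies in $P(2)$. On $P(2)$ we have $\psi=1$, so this straight path is the integral curve, by uniqueness of integral curves. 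Hence $f_{2,i,j}(p)=\tau_{i,j}(p)$. It follows that $f_{2,i,j}(\mathcal{N}_n(G'))=\mathcal{N}_n(G'_{i,j})$ and $f_{2,i,j}(G')=G'_{i,j}$, by the definitions of $G'_{i,j}$ and $\mathcal{N}_n(G'_{i,j})$ as horizontal translates.

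The only real point needing care is this containment step: the entire straight segment of translation must stay in the region where $\psi=1$. This is settled by the convexity-type estimate above. The intermediate translation vectors have norm at most $\sqrt2$, the inequality $\sqrt2+\frac1{5n}+\frac1{100n}<2$ holds, and $P(r)$ is a horizontal neighborhood of $G$, so we do not need convexity of $P(2)$ itself.
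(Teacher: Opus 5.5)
Your proposal is correct and matches the paper's proof essentially step for step. The paper uses the same field $\psi\cdot(\frac{i}{n},\frac{j}{n},0)$ with the same bound $\|\nabla\vec v'\|\le\sqrt2\,C_0$, and gets $C_2=e^{\sqrt2 C_0}$ by applying Theorem \ref{Lipschitz} to the time-$(\pm1)$ maps. It also uses the same containment argument from Lemma \ref{packing}(1) to see that the flow is the genuine translation on $\mathcal{N}_n(G')$; your explicit appeal to uniqueness of integral curves only spells out a step the paper leaves implicit.
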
 

\begin{proof}
Let $\vec{w}'(x,y,z)=(\frac{i}{n},\frac{j}{n}, 0)$ be a constant horizontal vector field on $P(3)$, and let $\psi:P(3)\to [0,1]$ be the function we took before Lemma \ref{elementary}. Then we take a vector field $\vec{v}'$ on $P(3)$ by $\vec{v}'(x,y,z)=\psi(x,y,z)\cdot \vec{w}'(x,y,z)$, and we have 
$$\|\nabla \vec{v}'\|\leq |\psi|\cdot \|\nabla \vec{w}'\|+\|\vec{w}'\|\cdot \|\nabla \psi\|\leq 1\cdot 0+\sqrt{2}\cdot C_0=\sqrt{2}C_0.$$
As the vector field $\vec{v}$, $\vec{v}'$ is also tangent to $D_1,D_2,D_3$ and vanishes on $\partial_{vec}P(3)$.

Let $f_{2,i,j}:P(3)\to P(3)$ be the time-$1$ map generated by the vector field $\vec{v}'$. Theorem \ref{Lipschitz} implies that $f_{2,i,j}$ is $C_2$-Lipschitz for $C_2=e^{\sqrt{2}C_0}>1$, and the time-$(-1)$ map is also $C_2$-Lipschitz. So $f_{2,i,j}$ is a $C_2$-bi-Lipschitz homeomorphism, and it restricts to identity $\partial_{ver}P(3)$.

Note that $\vec{v}'$ restricts to the constant vector field $(\frac{i}{n},\frac{j}{n}, 0)$ in $P(2)$. For any $t\in [0,1]$, it is easy to check that the $(\frac{i}{n}t,\frac{j}{n}t)$-horizontal translation of $\mathcal{N}_n(G')$ is contained in $P(2)$, as the proof of Lemma \ref{packing} (1). So the restriction of $f_{2,i,j}$ to $\mathcal{N}_n(G')$ is simply the horizontal translation by $(\frac{i}{n},\frac{j}{n})$. In particular, it maps $G'$ to $G'_{i,j}$, and maps $\mathcal{N}_n(G')$ to $\mathcal{N}_n(G'_{i,j})$.
\end{proof}

\subsection{Construct a bi-Lipschitz homeomorphism $(P(3),\mathcal{N}_n(G))\to (P(3),P(2))$.}

In this subsection, we contruct a piecewise smooth bi-Lipschitz homeomoprhism $f_3:(P(3),\mathcal{N}_n(G),G)\to (P(3),P(2),G)$. This is the only homeomorphism in our construction whose Lipschitz number depends on $n$.

\begin{lem}\label{dependonn}
There exists a constant $C_3>1$, such that there exists a piecewise smooth homeomorphism $f_3:(P(3),\mathcal{N}_n(G),G)\to (P(3),P(2),G)$ and the following hold.
\begin{enumerate}
\item $f_3$ is a $(C_3n)$-Lipschitz map.
\item $f_3^{-1}$ is a $C_3$-Lipschitz map. 
\item The restriction of $f_3$ to $\partial_{vec}P(3)$ is the identity.
 \end{enumerate}
\end{lem}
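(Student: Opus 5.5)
The construction will be a horizontal radial stretch, done separately on each of the three pieces $Q_1,Q_2,Q_3$ of $P(3)$ and preserving the $z$-coordinate. Write $\rho=\frac{1}{100C_1n}$, so that $\mathcal{N}_n(G)=P(\rho)$. Fix a piecewise linear homeomorphism $\lambda:[0,3]\to[0,3]$ with
\[
\lambda(s)=\frac{2}{\rho}s \ \text{ for } s\in[0,\rho], \qquad \lambda \ \text{affine on } [\rho,3] \ \text{with}\ \lambda(\rho)=2,\ \lambda(3)=3 .
\]
Its slopes are $\frac{2}{\rho}=200C_1 n$ and $\frac{1}{3-\rho}\in(\frac13,1]$. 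Consequently $\lambda$ is $(200C_1n)$-Lipschitz and $\lambda^{-1}$ is $3$-Lipschitz.

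Next I define $f_3$ on each horizontal slice $z=c$. On $Q_1$ (where $z\ge 0$) the horizontal "center" is the point $(0,0)$. On $Q_2$, $Q_3$ (where $z\le0$) the center is $(0,\mp c)$; this is exactly the point of $G$ at that height, and it lies in the relevant half-plane $y\ge0$ or $y\le0$. In each slice-piece, $f_3$ moves a point at horizontal offset $u\in\mathbb{R}^2$ from the center to offset $\frac{\lambda(|u|)}{|u|}u$, and fixes the center. Since the slice of each $Q_i$ is a disk (or half-disk) of radius $3$ about its center, this map carries $Q_i$ onto itself, and it maps $Q_i\cap P(\rho)$ onto $Q_i\cap P(2)$. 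It fixes $G$ pointwise and fixes the circle $|u|=3$, so it restricts to the identity on $\partial_{ver}P(3)$.

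Two points need checking.

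(a) \emph{The definitions agree on the interfaces.} The interface between $Q_1$ and $Q_2\cup Q_3$ is the plane $z=0$, and all three centers are $(0,0)$ there. The interface between $Q_2$ and $Q_3$ lies in $\{y=0,\ z\le0\}$. Here I check that the radial map about $(0,-c)$, restricted to the half-plane $y\ge0$, sends the line $y=0$ to itself. This is false in general: radial scaling about the off-line point $(0,-c)$ moves points off the line $y=0$ unless $|c|$ is small. So on $Q_2\cap Q_3$ (which at height $c$ is the segment $\{y=0\}$ of the slice, at distance $|c|$ from the center) the radial maps do not match. This is the main obstacle.

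To fix it, I replace the round radial map by a \emph{"half-disk-adapted" radial map}. Take the slice of $Q_2$, which is the half-disk $\{x^2+(y+c)^2\le r^2,\ y\ge0\}$. For $|c|\ge 3$ the slice of $Q_2$ at that height is a full disk, and nothing needs adapting. For $|c|<3$, use the "norm" $\nu(u)$ whose unit ball is the slice of $Q_2\cap P(1)$, a region star-shaped about the center when that center is interior. Then scale along rays by $\lambda\circ\nu$. Because the sets $Q_i\cap P(r)$ are exactly the sublevel sets $\{\nu\le r\}$, the radial map preserves the flat side $y=0$ (the portion of the boundary lying on the line $y=0$ is scaled along itself). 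The two sides then agree, because on $\{y=0\}$ both maps are determined by the same one-dimensional data, namely the distance along the line. When $c$ is near $0$ the center lies on the flat side, and the gauge $\nu$ is a uniformly Lipschitz, bi-Lipschitz-to-Euclidean gauge. Its constants are controlled uniformly in $c$ and in $n$, because the sets $P(r)$ are cut by a single line at bounded position.

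(b) \emph{The Lipschitz bounds.} Away from the center, a map of the form $u\mapsto \frac{\lambda(\nu(u))}{\nu(u)}u$ has differential bounded by a constant times $\max(\lambda',\lambda(s)/s)$. Here $\lambda(s)/s\le \frac{2}{\rho}$ for all $s$, and $\lambda'\le\frac{2}{\rho}$. Dependence on $z$ enters only through the center $(0,\mp c)$, which moves with unit speed in $z$, and through the gauge, which also depends Lipschitzly on $z$. These contribute at most another factor of the same order. Applying Proposition \ref{LipschitzBasicProperties}(3) with the non-smooth locus consisting of $\{\nu=\rho\}$, the interfaces, and $G$, this gives $\lip(f_3)\le C_3 n$. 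For the inverse, the same formula with $\lambda^{-1}$ has slopes at most $\max(\frac{\rho}{2},3)$, and $\lambda^{-1}(s)/s\le 3$, so $\lip(f_3^{-1})\le C_3$ with $C_3$ independent of $n$. This proves (1)–(3).
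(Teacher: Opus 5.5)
Your proposal has a genuine gap. Your map preserves the $z$-coordinate, and no $z$-preserving homeomorphism of $P(3)$ can carry $P(\rho)$ onto $P(2)$. At height $z=c\le 0$, the slice of $P(r)$ is the union of the two closed disks of radius $r$ centred at $(0,c)$ and $(0,-c)$, whose centres are $2|c|$ apart. For $\rho<|c|\le 2$, the slice of $P(\rho)$ is therefore two disjoint disks, while the slice of $P(2)$ is connected. A slice-preserving $f_3$ would restrict to a homeomorphism between these, which is impossible. So the difficulty is not a mismatch on $Q_2\cap Q_3$ that a cleverer radial map could repair.

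Your proposed fix is also wrong on its own terms. The slice of $Q_2\cap P(r)$ is $\{|u|\le r\}\cap\{y\ge 0\}$, with the cutting line $y=0$ fixed. Hence these sets are not dilates of one another about $(0,-c)$, and they are not the sublevel sets of any gauge. Moreover, when $c\neq 0$ that line misses the centre, so a map moving points along rays from $(0,-c)$ can preserve it only by fixing it pointwise. Even your first step fails. For $0<|c|<3$, the slice of $Q_2$ is a disk cut by a chord not through its centre. Since $\lambda(s)\ge s$, your radial stretch pushes points across $y=0$, out of $Q_2$.

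The missing idea is to move points vertically near the branch point, that is, to unbend each leg into the trunk. The paper uses the piecewise linear homeomorphism $g_2\colon Q_2\to Q_1$, defined by $g_2(x,y,z)=(x,y+z,-z)$ if $y+z\ge 0$ and $g_2(x,y,z)=(x,y+z,\tfrac{8y}{8+y+z})$ if $y+z\le 0$. This map has the following properties:
\begin{enumerate}
\item its bi-Lipschitz constant is independent of $n$;
\item it sends each $Q_2\cap\partial_{ver}P(r)$ onto the round cylinder $Q_1\cap\partial_{ver}P(r)$;
\item it is the identity on $Q_1\cap Q_2$;
\item it maps $Q_2\cap Q_3$ onto $Q_1\cap Q_3$.
\end{enumerate}
The paper then sets $g_3=\iota\circ g_2\circ\iota$ with $\iota(x,y,z)=(x,-y,z)$. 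On $Q_1$ it uses exactly your polar stretch $g(r,\theta,z)=(\lambda(r),\theta,z)$, and it defines $f_3=g_i^{-1}\circ g\circ g_i$ on $Q_i$.

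Compatibility on the interfaces follows from $g\circ\iota=\iota\circ g$, from $g$ preserving the half-disks $Q_1\cap Q_2$ and $Q_1\cap Q_3$, and from $g_3=\iota\circ g_2$ on $Q_2\cap Q_3$. The bounds $\lip(f_3)\le C_3 n$ and $\lip(f_3^{-1})\le C_3$ then come from your estimates for $\lambda$. The differential of the polar map has norm $\max\{\lambda'(r),\lambda(r)/r\}$, and this is multiplied by the fixed bi-Lipschitz constants of $g_2$ and $g_3$.
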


\begin{proof}
Recall that in the expression of $P(3)$ in (\ref{Yshape}), the second term is a decomposition of $P(3)$ as three closed subsets that are disjoint in their interiors. We denote these three subsets by: 
\begin{equation*}
\begin{aligned}
& Q_1=\{(x,y,z)\ |\ x^2+y^2\leq 9, 0\leq z\leq 8\},\\
& Q_2=\{(x,y,z)\ |\ x^2+(y+z)^2\leq 9^2,-8\leq z\leq 0, y\geq 0\}, \\
& Q_3=\{(x,y,z)\ |\ x^2+(y-z)^2\leq 9^2,-8\leq z\leq 0, y\leq 0\}.
\end{aligned}
\end{equation*}
See the right side of Figure \ref{Figure2} for $Q_1, Q_2, Q_3$. Each $Q_i$ has a singular foliation given by $Q_i\cap \partial_{ver}P(r)$ for $r\in (0,3]$. 

For each $i=2,3$, we first construct a $C_3'$-bi-Lipshitz homeomorphism $g_i: Q_i\to Q_1$, such that $g_i$ sends $Q_i\cap \partial_{ver}P(r)$ to $Q_1\cap \partial_{ver}P(r)$ for any $r\in (0,3]$. The map $g_2$ is defined by 
\begin{equation*}
\begin{aligned}
g_2(x,y,z)=
\begin{cases}
(x,y+z,-z) & \text{if\ }y+z\geq 0,\\
(x,y+z,\frac{8y}{8+y+z}) & \text{if\ } y+z\leq 0.
\end{cases}
\end{aligned}
\end{equation*} 
$g_2$ is actually a composition of two maps. We first use the linear map $(x,y,z)\to (x,y+z,-z)$ to map $Q_2$ to a subset of $Q_1$. In particular, it maps the $y+z\geq 0$ part of $Q_2$ to the $y\geq 0$ part of $Q_1$ by homeomorphism, and maps the $y+z\leq 0$ part of $Q_2$ to the $y\leq 0, y+z\geq 0$ part of $Q_1$ by homeomorphism. Then we use the homeomorphism $(x,y,z)\to (x,y,\frac{8(z+y)}{8+y})$ to map the $y\leq 0, y+z\geq 0$ part of $Q_1$ to the $y\leq 0$ part of $Q_1$. 

It is easy to check that $g_2$ sends $Q_2\cap \partial_{ver}P(r)$ to  $Q_1\cap \partial_{ver}P(r)$ for all $r\in (0,3]$. The restriction of $g_2$ to $Q_2\cap Q_1=\{(x,y,0)\ |\ x^2+y^2\leq 9, y\geq 0\}$ is the identity, and we have $g_2(Q_2\cap Q_3)=Q_3\cap Q_1$. Since $g_2$ is a piecewise smooth homeomorhism between compact spaces, it is a $C_3'$-bi-Lipschitz homeomorphism for some constant $C_3'>1$. 

Let $\iota(x,y,z)=(x,-y,z)$ be the reflection along the $y$-plane. Then it preserves $Q_1$, and swaps $Q_2$ and $Q_3$. We define $g_3:Q_3\to Q_1$ by $g_3=\iota\circ g_2\circ \iota$. Since $\iota$ is an isometry, $g_3$ is also a $C_3'$-bi-Lipschitz homeomorphism. Moreover, $g_3$ sends $Q_3\cap \partial_{ver}P(r)$ to  $Q_1\cap \partial_{ver}P(r)$ for all $r\in (0,3]$, its restriction to $Q_3\cap Q_1=\{(x,y,0)\ |\ x^2+y^2\leq 9, y\leq 0\}$ is the identity, $g_3=\iota \circ g_2$ on $Q_2\cap Q_3=\{(x,0,z)\ |\ x^2+z^2\leq 9, z\leq 0\}$, and $g_3(Q_2\cap Q_3)=Q_2\cap Q_1$.

{\bf Claim:} There exists a constant $C_3''>1$, such that there is a piecewise smooth homeomorphsm $g: (Q_1,Q_1\cap \mathcal{N}_n(G),Q_1\cap G)\to  (Q_1,Q_1\cap P(2), Q_1\cap G)$ and the following hold.
\begin{enumerate}[(i)]
\item $g$ is a $(C_3''n)$-Lipschitz map.
\item $g^{-1}$ is a $C_3''$-Lipschitz map.
\item $g\circ \iota=\iota \circ g$.
\item $g=id$ on $Q_1\cap \partial_{ver}P(3)$.
\item $g$ preserves $Q_1\cap Q_2$ and $Q_1\cap Q_3$. 
\end{enumerate}

Once we have this map $g$, we define the desired homeomorphism $f_3:(P(3),\mathcal{N}_n(G),G)\to (P(3),P(2),G)$ by 
\begin{equation*}
\begin{aligned}
f_3(x,y,z)=
\begin{cases}
g(x,y,z)& \text{if\ }(x,y,z)\in Q_1\\
g_2^{-1}\circ g\circ g_2(x,y,z)& \text{if\ }(x,y,z)\in Q_2\\
g_3^{-1}\circ g\circ g_3(x,y,z)& \text{if\ }(x,y,z)\in Q_3.
\end{cases}
\end{aligned}
\end{equation*}
This map $f_3$ is well-defined since $g_2=id$ on $Q_1\cap Q_2$, $g_3=id$ on $Q_1\cap Q_3$, $g$ preserves $Q_1\cap Q_2$ and $Q_1\cap Q_3$, $g_3=\iota\circ g_2$ on $Q_2\cap Q_3$, and $g\circ \iota=\iota \circ g$. $f_3$ sends $\mathcal{N}_n(G)$ to $P(2)$ since $g$ sends $Q_1\cap \mathcal{N}_n(G)$ to $Q_1\cap P(2)$, and $g_2,g_3$ both preserve the intersectiosn with $\partial_{ver}P(r)$ for all $r\in(0,3]$. $f_3$ restricts to the identity on $\partial_{ver}P(3)$ since $g$ restricts to the identity on $Q_1\cap \partial_{ver}P(3)$. By the bi-Lipschitz constants of $g,g_2,g_3$, we know that $f_3$ is a $(C_3''(C_3')^2n)$-Lipschitz homeomorphism and $f_3^{-1}$ is $(C_3''(C_3')^2)$-Lipschitz, and we take the constant $C_3=C_3''(C_3')^2$ in this lemma.


Now it remains to prove the claim.

 Recall that $\mathcal{N}_n(G)=P(\frac{1}{100C_1n})$, and we will use the polar coordinate $(r,\theta,z)$ on $Q_1$. We define $g:(Q_1,Q_1\cap \mathcal{N}_n(G),Q_1\cap G)\to  (Q_1,Q_1\cap P(2), Q_1\cap G)$ by
\begin{equation*}
\begin{aligned}
g(r,\theta,z)=
\begin{cases}
(200C_1n r,\theta,z)& \text{if\ }r\leq \frac{1}{100C_1n},\\
(\frac{100C_1n}{300C_1n-1}r+3\frac{200C_1n-1}{300C_1n-1},\theta,z) & \text{if\ } \frac{1}{100C_1n}\leq r\leq 3.
\end{cases}
\end{aligned}
\end{equation*}
Then $g^{-1}$ is given by 
\begin{equation*}
\begin{aligned}
g^{-1}(r,\theta,z)=
\begin{cases}
(\frac{1}{200C_1n} r,\theta,z)& \text{if\ }r\leq 2,\\
((3-\frac{1}{100C_1n})r-3(2-\frac{1}{100C_1n}),\theta,z) & \text{if\ } 2\leq r\leq 3.
\end{cases}
\end{aligned}
\end{equation*}
Since $g$ preserves the $\theta$-coordinate, we have $g\circ \iota=\iota\circ g$, and $g$ preserves $Q_1\cap Q_2$ and $Q_1\cap Q_3$.  So items (iii) and (v) hold. Since $g(3,\theta,z)=(3,\theta,z)$, it restricts to the identity on $Q_1\cap \partial_{ver}P(3)$, thus (iv) holds. Now it remains to compute the Lipschitz constants of $g$ and $g^{-1}$.

We have four formulas in the definitions of $g$ and $g^{-1}$, and all of them are in the form of $h(r,\theta,z)= (f(r),\theta,z)$. At $(r_0,\theta_0,z_0)$, the tangent map $dh_{(r_0,\theta_0,z_0)}:T_{(r_0,\theta_0,z_0)}\mathbb{R}^3\to T_{(f(r_0),\theta_0,z_0)} \mathbb{R}^3$ is given by 
$$\frac{\partial }{\partial r}|_{(r_0,\theta_0,z_0)}\to f'(r_0)\frac{\partial }{\partial r}|_{(f(r_0),\theta_0,z_0)},\ \frac{\partial }{\partial \theta}|_{(r_0,\theta_0,z_0)}\to \frac{\partial }{\partial \theta}|_{(f(r_0),\theta_0,z_0)},\ \frac{\partial }{\partial z}|_{(r_0,\theta_0,z_0)}\to \frac{\partial }{\partial z}|_{(f(r_0),\theta_0,z_0)}.$$ 
Since $$\|\frac{\partial }{\partial r}|_{(r_0,\theta_0,z_0)}\|=1, \|\frac{\partial }{\partial \theta}|_{(r_0,\theta_0,z_0)}\|=r_0, \|\frac{\partial }{\partial z}|_{(r_0,\theta_0,z_0)}\|=1,$$ and these vectors form an orthogonal basis of $T_{(r_0,\theta_0,z_0)}$, we have $\|dh_{(r_0,\theta_0,z_0)}\|=\max{\{f'(r_0),\frac{f(r_0)}{r_0}\}}$.

We apply this computation to the formulas in $g$ and $g^{-1}$. The Lipschitz constant of $g$ equals
the maximum of the following numbers:
$$200C_1n, \frac{100C_1n}{300C_1n-1},\max\{\frac{100C_1n}{300C_1n-1}+\frac{3}{r}\frac{200C_1n-1}{300C_1n-1}\ |\ \frac{1}{100C_1n}\leq r \leq 3\},$$
and the Lipschitz constant of $g^{-1}$ equals the maximum of:
$$\frac{1}{200C_1n}, 3-\frac{1}{100C_1n}, \max \{(3-\frac{1}{100C_1n})-\frac{3}{r}(2-\frac{1}{100C_1n})\ |\ 2\leq r\leq 3\}.$$
A dirct computation shows that the Lipschitz constant of $g$ is given by the first and third terms, which is $200C_1n$; and the Lipschitz constant of $g^{-1}$ is given by the second term, which is at most $3$. So we can take the constant $C_3''$ to be $\max\{200C_1,3\}$.

The proofs of the claim and this lemma are done. A picture of the restriction of $f_3$ on the intersection of $P(3)$ and the $yz$-plane can be found in Figure \ref{Figure5}.

\begin{figure}[htbp]
    \centering
    \def\svgwidth{5.5in} 
\begingroup%
  \makeatletter%
  \providecommand\color[2][]{%
    \errmessage{(Inkscape) Color is used for the text in Inkscape, but the package 'color.sty' is not loaded}%
    \renewcommand\color[2][]{}%
  }%
  \providecommand\transparent[1]{%
    \errmessage{(Inkscape) Transparency is used (non-zero) for the text in Inkscape, but the package 'transparent.sty' is not loaded}%
    \renewcommand\transparent[1]{}%
  }%
  \providecommand\rotatebox[2]{#2}%
  \newcommand*\fsize{\dimexpr\f@size pt\relax}%
  \newcommand*\lineheight[1]{\fontsize{\fsize}{#1\fsize}\selectfont}%
  \ifx\svgwidth\undefined%
    \setlength{\unitlength}{595.27559055bp}%
    \ifx\svgscale\undefined%
      \relax%
    \else%
      \setlength{\unitlength}{\unitlength * \real{\svgscale}}%
    \fi%
  \else%
    \setlength{\unitlength}{\svgwidth}%
  \fi%
  \global\let\svgwidth\undefined%
  \global\let\svgscale\undefined%
  \makeatother%
  \begin{picture}(1,0.47619048)%
    \lineheight{1}%
    \setlength\tabcolsep{0pt}%
    \put(0,0){\includegraphics[width=\unitlength,page=1]{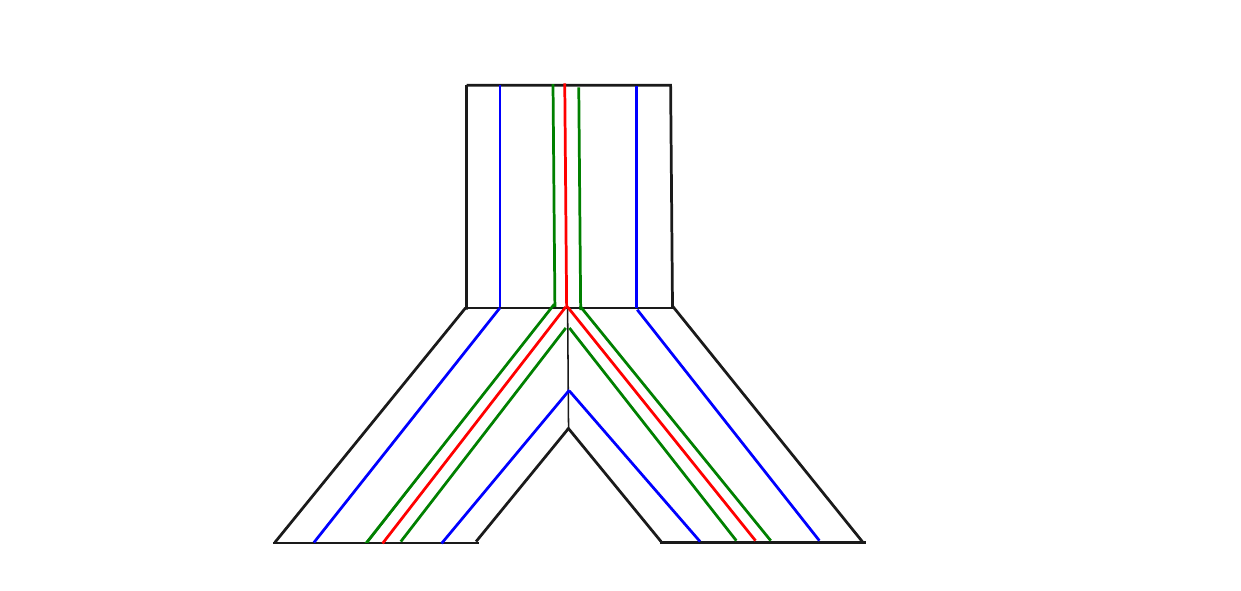}}%
    \put(0.28773815,0.25810145){\color[rgb]{0.10196078,0.10196078,0.10196078}\makebox(0,0)[lt]{\lineheight{1.25}\smash{\begin{tabular}[t]{l}$P(3)$\end{tabular}}}}%
    \put(0.40487033,0.42006262){\color[rgb]{0,0,1}\makebox(0,0)[lt]{\lineheight{1.25}\smash{\begin{tabular}[t]{l}$P(2)$\end{tabular}}}}%
    \put(0.6030463,0.00553735){\color[rgb]{1,0,0}\makebox(0,0)[lt]{\lineheight{1.25}\smash{\begin{tabular}[t]{l}$G$\end{tabular}}}}%
    \put(0.28521831,0.00185704){\color[rgb]{0,0.50196078,0}\makebox(0,0)[lt]{\lineheight{1.25}\smash{\begin{tabular}[t]{l}$\mathcal{N}_n(G)$\end{tabular}}}}%
    \put(0,0){\includegraphics[width=\unitlength,page=2]{New_Figure_5.pdf}}%
  \end{picture}%
\endgroup%

    \caption{A picture of the restriction of $f_3$ on $P(3)\cap yz$-plane, which sends $\mathcal{N}_n(G)$ to $P(2)$.}
     \label{Figure5}
\end{figure}
\end{proof}

\subsection{Proof of Theorem \ref{technical}}

Let's summarize the maps we constructed in the previous a few sections. The main object is the $Y$-shaped $3$-manifold $P(3)$ and $\partial P(3)=\partial_{ver}P(3)\cup (D_1\cup D_2\cup D_3)$, as in Figure \ref{Figure2}. In $P(3)$, we have a closed submanifold $P(2)\subset P(3)$. For any positive integer $n$, we have further closed submanifolds $\mathcal{N}_n(G),\mathcal{N}_n(G')\subset P(2)$. For any $i,j\in [-n,n]\cap \mathbb{Z}$, the $(\frac{i}{n},\frac{j}{n},0)$-translation of $\mathcal{N}_n(G')$ is denoted by $\mathcal{N}_n(G'_{i,j})$, and these manifolds are disjoint from each other in $P(2)$ (Lemma \ref{packing} (2)). All these submanifolds of $P(3)$ intersect with $\partial P(3)$ in the interior of $D_1\cup D_2\cup D_3$.

We have constructed the following piecewise smooth bi-Lipschitz homeomorphisms of $P(3)$ that restrict to the identity on $\partial_{ver}P(3)$. Recall that the constants $C_1,C_2,C_3$ below are independent of $n$
\begin{enumerate}
\item In Lemma \ref{elementary}, we constructed a $C_1$-bi-Lipschitz homeomorphism $f_1:(P(3),\mathcal{N}_n(G))\to (P(3),\mathcal{N}_n(G'))$.
\item In Lemma \ref{translation}, for any $i,j\in [-n,n]\cap \mathbb{Z}$, we constructed a $C_2$-bi-Lipschitz homeomorphism $f_{2,i,j}:(P(3),\mathcal{N}_n(G'))\to (P(3),\mathcal{N}_n(G'_{i,j}))$.
\item In Lemma \ref{dependonn}, we constructed a $(C_3n)$-Lipschitz homeomorphism $f_3:(P(3),\mathcal{N}_n(G))\to (P(3),P(2))$ such that $f_3^{-1}$ is $C_3$-Lipschitz.
\end{enumerate}

By Lemma \ref{onemetric}, we only need to prove Theorem \ref{technical} for one Riemannian metric $g_0$ on $M_k=H\cup H'$. We take $g_0$ such that its restriction to $H$ is isometric to $H_k$ (as defined in Section \ref{equip}). Recall that the metric on $H_k$ is obtained by identifying $2k-2$ copies of $P(3)$ (with the Euclidean metric) along $D_1,D_2,D_3$ as in Figure \ref{Figure1}. Since the submanifolds $P(2), \mathcal{N}_n(G),\mathcal{N}_n(G'),\mathcal{N}_n(G'_{i,j})$ of $P(3)$ intersect with $\partial  P(3)$ only in the interior of $D_1\cup D_2\cup D_3$, copies of these submanifolds are pasted together to closed submanifolds of $H_k$, and we denote them by $H_k(2), H_{k,n}(G), H_{k,n}(G'), $ $H_{k,n}(G'_{i,j})$, respectively. All of these closed submanifolds are contained in the interior of $H_k$, are homeomorphic to the genus-$k$ handlebody, and the complements of their interiors in $H_k$ are all homeomorphic to $\Sigma_k\times I$. Moreover, when $(i,j)$ runs over all pairs in $[-n,n]\cap \mathbb{Z}$, $H_{k,n}(G'_{i,j})$'s are disjoint from each other.

Since $H_k$ is obtained from $2k-2$ copies of $P(3)$ by pasting copies of $D_1,D_2,D_3$ via the identity, the piecewise smooth bi-Lipschitz homeomorphisms of $P(3)$ in Lemmas \ref{elementary}, \ref{translation}, and \ref{dependonn} induce piecewise smooth bi-Lipschitz self-homeomorphisms of $H_k$ that restrict to the identity on $\partial H_k$. These maps on $H_k$ further extend to piecewise smooth bi-Lipschitz self-homeomorphisms of $M_k$ by taking the identity map on $M_k\setminus \text{int}(H_k)$, and we get the following piecewise smooth bi-Lipschitz homeomorphisms of $M_k$.
\begin{enumerate}
\item By Lemma \ref{elementary}, we have a $C_1$-bi-Lipschitz homeomorphism $h_1:(M_k, H_k,H_{k,n}(G))\to (M_k, H_k,H_{k,n}(G'))$.
\item By Lemma \ref{translation}, for any $i,j\in [-n,n]\cap \mathbb{Z}$, we have a $C_2$-bi-Lipschitz homeomorphism $h_{2,i,j}:(M_k, H_k,H_{k,n}(G'))\to (M_k, H_k,H_{k,n}(G'_{i,j}))$.
\item By Lemma \ref{dependonn}, we have a $(C_3n)$-Lipschitz homeomorphism $h_3:(M_k, H_k,H_{k,n}(G))\to (M_k, H_k,H_k(2))$ such that $h_3^{-1}$ is $C_3$-Lipschitz.
\end{enumerate}

Now we are ready to prove Theorem \ref{technical}. 
\begin{proof}[Proof of Theorem \ref{technical}]
Roughly speaking, the map $F_n$ will restricts to the identity on the complement of $\cup_{i,j=-n}^nH_{k,n}(G'_{i,j})$, and it maps each $H_{k,n}(G'_{i,j})$ to its complement via a piecewise smooth orientation-reversing homeomorphism, with controlled Lipschitz constant. Since we have $(2n+1)^2$ terms in the union $\cup_{i,j=-n}^nH_{k,n}(G'_{i,j})$, we have $\text{deg}(F_n)=1-(2n+1)^2$.

Now we give details of the proof of Theorem \ref{technical}.
For $M_k=\#^kS^2\times S^1$, its standard genus-$k$ Heegaard decomposition $M_k=H\cup H'$ is obtained by taking two copies of the genus-$k$ handlebody and using the identity map to paste their boundaries. We take a Riemannian metric $g_0$ on $M$ such that its restriction to $H$ is isometric to $H_k$. By Lemma \ref{onemetric}, we only need to prove Theorem \ref{technical} for $g_0$.

Once we identify $H\subset M_k$ with the Riemannian genus-$k$ handlebody $H_k$, we get a submanifold $H_k(2)\subset H_k\subset M$. Since $H_k\setminus \text{int}(H_k(2))$ is homeomorphic to $\Sigma_k\times I$, $M_k=H_k(2)\cup (M_k\setminus \text{int}(H_k(2)))$ is also a standard genus-$k$ Heegaard decomposition of $M_k$. So there exists an orientation-reversing homeomorphism $h_4: M_k\to M_k$ that swaps the two components of $M_k\setminus \partial (H_k(2))$ and restricts to the identity on $\partial H_k(2)$. We can make $h_4$ to be a pieceswise smooth bi-Lipschitz homeomophism, with bi-Lipschitz constant $C_4>1$, which only depends on the Riemannian metric on $M_k$.

For any positive integer $n$, we define the desired map $F_n:M_k\to M_k$ by
\begin{equation*}
\begin{aligned}
F_n(x)=\begin{cases}
x &\text{if\ } x\in M_k\setminus \cup_{i,j=-n}^n\text{int}(H_{k,n}(G'_{i,j}))\\
h_{2,i,j}\circ h_1\circ h_3^{-1} \circ h_4 \circ h_3 \circ h_1^{-1}\circ h_{2,i,j}^{-1}(x) & \text{if\ } x\in H_{k,n}(G'_{i,j}) \text{\ for\ some\ }i,j\in [-n,n]\cap \mathbb{Z}.
\end{cases}
\end{aligned}
\end{equation*}
In the second case of the formula of $F_n(x)$, the composition works as the sequence below.
\begin{equation*}
\begin{aligned}
&H_{k,n}(G'_{i,j})\xrightarrow{h_{2,i,j}^{-1}} H_{k,n}(G')\xrightarrow{h_1^{-1}}H_{k,n}(G)\xrightarrow{h_3}H_k(2)\xrightarrow{h_4}M_k\setminus \text{int}(H_k(2))\xrightarrow{h_3^{-1}}\\
& M_k\setminus \text{int}(H_{k,n}(G))\xrightarrow{h_1} M_k\setminus \text{int}(H_{k,n}(G'))\xrightarrow{h_{2,i,j}}M_k\setminus \text{int}(H_{k,n}(G'_{i,j})).
\end{aligned}
\end{equation*}
This composition is an orientation-reversing homeomorphism $H_{k,n}(G'_{i,j})\to M_k\setminus \text{int}(H_{k,n}(G'_{i,j}))$ and we denote it by $F_{n,i,j}$.

Here we need the fact that $H_{k,n}(G'_{i,j})$'s are disjoint from each other to define $F_n$. Since $h_4$ restricts to the identity on $\partial H_k(2)$, the restriction of $F_{n,i,j}$ to $\partial H_{k,n}(G'_{i,j})$ is the identity. So $F_n$ is well-defined.

To compute $\text{deg}(F_n)$, we take any $x_0\in M_k \setminus \cup_{i,j=-n}^nH_{k,n}(G'_{i,j})$. Then $F_n^{-1}(x_0)=\{x_0\}\cup \{F_{n,i,j}^{-1}(x_0)\ |\ i,j\in [-n,n]\cap \mathbb{Z}\}$. For $F_n$, the local degree at $x_0$ is $1$, and the local degree at any $F_{n,i,j}^{-1}(x_0)$ is $-1$, since $F_{n,i,j}$ is an orientation reversing homeomorphism. Since $|[-n,n]\cap \mathbb{Z}|=2n+1$, we have $\text{deg}(F_n)=1-(2n+1)^2=-4n^2-4n$, thus item (1) of Theorem \ref{technical} holds.

By our estimates of bi-Lipschitz constants and the definition of $F_n$, $F_n$ is a Lipschitz map with Lipschitz constant $C_2\cdot C_1\cdot (C_3n)\cdot C_4\cdot C_3 \cdot C_1\cdot C_2=C_1^2C_2^2C_3^2C_4n$. We take $C=C_1^2C_2^2C_3^2C_4$, then item (2) of Theorem \ref{technical} holds.

Since $H'\subset M_k\setminus \cup_{i,j=-n}^n\text{int}(H_{k,n}(G'_{i,j}))$ and $F_n$ restricts to the identity on $M_k\setminus \cup_{i,j=-n}^n\text{int}(H_{k,n}(G'_{i,j}))$, we have $F_n|_{H'}=id_{H'}$. So item (3) of Theorem \ref{technical} holds. 

The proof of Theorem \ref{technical} is done.

\end{proof}

\bibliographystyle{amsalpha}

\begin{thebibliography}{GWWZ}

\bibitem[BGM]{BGM} A. Berdnikov, L. Guth, F. Manin, 
{\it Degrees of maps and multiscale geometry.}
Forum Math. Pi 12 (2024), Paper No. e2, 48 pp.

\bibitem[Be]{Be} R. Bethuel {\it The approximation problem for Sobolev maps between two manifolds},
Acta Math. 167 (1991), no. 3-4, 153--206.

\bibitem[DLWWW]{DLWWW} J. R. Duan, J. F. Lin,  S.C. Wang Z. Z. Wang,  D. Y. Wei,  {\it Flexible exponent of geometric 3-manifolds,  Legendrian maps of Seifert spaces.} Preprint 2026

\bibitem[Gr]{Gr} M. Gromov, {\it Volume and bounded cohomology},  Inst. Hautes \'{E}tudes Sci. Publ. Math. No. 56, (1982), 5--99

\bibitem[Ha]{Ha} P. Hajlasz, {\it Approximation of Sobolev mappings}, 
Nonlinear Anal. 22 (1994), no. 12, 1579--1591.




\bibitem[KSSV]{KSSV} M. Kunzinger, H. Schichl, R. Steinbauer, J. Vickers, {\it Global Gronwall estimates for integral curves on Riemannian manifolds}, Rev. Mat. Complut. 19 (2006), no. 1, 133--137.









\bibitem[ScW]{ScW} P. Scott, C. T. C. Wall: {\it Topological methods in group theory.
Homological group theory}, (Proc. Sympos., Durham (1979), 137--203),
London Math. Soc. Lecture Note Ser. 36, Cambridge Univ. Press,
Cambridge-New York, 1979.


\bibitem[Thom]{Thom}
R. Thom, {\em Quelques propri\'et\'es globales des vari\'et\'es differentiable}, Comm. Math. Helv. {\bf 28}, (1954), 17--86.


\bibitem[Th]{Th} {\sc W.P. Thurston},
{\it The geometry and topology of $3$-manifolds}, Lecture Notes, Princeton 1977.

\bibitem[Wang]{Wang} S. C. Wang, {\it The  $\pi_1$-injectivity of self-maps of nonzero degree on  3-manifolds.}
Math. Ann. 297 (1993), no. 1, 171--189.










\end{thebibliography}

\end{document}